\definecolor{my_color}{rgb}{0,0.5,0.5}
\definecolor{MIXT}{rgb}{0.4,0.3,0.6}
\definecolor{mixt}{rgb}{0.5,0.3,0.2}
\definecolor{sin}{rgb}{0,0.5,0.5}
\definecolor{darkblue}{rgb}{0,0.1,0.8}
\definecolor{redi}{rgb}{0.6,0,0.4}
\numberwithin{equation}{section}
\font\tencyr=wncyr10 
\def\rus{\tencyr\cyracc}
\newtheorem{thm}{Theorem}[section]
\newtheorem{lm}[thm]{Lemma}
\newtheorem{cl}[thm]{Corollary}
\newtheorem{prop}[thm]{Proposition}
\theoremstyle{remark}
\newtheorem{rmk}[thm]{Remark}
\theoremstyle{definition}
\newtheorem{ex}[thm]{Example}
\newtheorem{df}{Definition}
\newtheorem*{rema}{Remark}
\newcommand{\eus}{\EuScript}
\newcommand {\ah}{{\mathfrak a}}
\newcommand {\be}{{\mathfrak b}}
\newcommand {\g}{{\mathfrak g}}
\newcommand {\el}{{\mathfrak l}}
\newcommand {\te}{{\mathfrak t}}
\newcommand {\ut}{{\mathfrak u}}
\newcommand {\sln}{{\mathfrak{sl}}_n}
\newcommand {\gA}{{\eus A}}
\newcommand {\gN}{{\eus N}}
\newcommand {\gH}{{\eus H}}
\newcommand {\esi}{\varepsilon}
\newcommand {\ap}{\alpha}
\newcommand {\lb}{\lambda}
\newcommand {\HW}{\widehat W}
\newcommand {\co}{{\mathcal O}}
\newcommand {\BR}{{\mathbb R}}
\newcommand {\BZ}{{\mathbb Z}}
\newcommand {\BC}{{\mathbb C}}
\newcommand {\hot}{{\mathsf{ht}}}
\newcommand {\supp}{{\mathsf{supp}}}
\newcommand {\GR}[2]{{\textrm{{\sf\bfseries #1}}}_{#2}}
\newcommand {\Ab}{\mathfrak{Ab}}
\newcommand {\Abo}{\mathfrak{Ab}^o}
\newcommand {\AD}{\mathfrak{Ad}}
\newcommand {\thi}{{\lfloor\theta/2\rfloor}}
\newcommand {\tthe}{{\lceil \theta/2\rceil}}
\newcommand {\bap}{\boldsymbol{\hat\ap}}
\newcommand {\beq}{\begin{equation}}
\newcommand {\eeq}{\end{equation}}
\newcommand{\curge}{\succcurlyeq}
\newcommand{\curle}{\preccurlyeq}
\renewcommand{\le}{\leqslant}
\renewcommand{\ge}{\geqslant}
\newenvironment{E8}[8]{
{\footnotesize\begin{tabular}{@{}c@{}}
{#1}{#2}{#3}{#4}\lower2.3ex\vbox{\hbox{{#5}\rule{0ex}{0.1ex}}
\hbox{\hspace{0.3ex}\rule{0ex}{.1ex}\rule{0ex}{.1ex}}\hbox{{#8}\strut}}{#6}{#7}
\end{tabular}}}
\begin{document}
\setlength{\parskip}{2pt plus 4pt minus 0pt}
\hfill {\scriptsize \today} 
\vskip1.5ex

\title[Abelian ideals and root systems]%
{Abelian ideals of a Borel subalgebra and root systems, II}
\author{Dmitri I. Panyushev}
\address[]{Institute for Information Transmission Problems of the R.A.S., Bolshoi Karetnyi per. 19,  
127051 Moscow,  Russia}
\email{panyushev@iitp.ru}
\keywords{Root system, Borel subalgebra, abelian ideal, modular lattice}
\subjclass[2010]{17B20, 17B22, 06A07, 20F55}
\thanks{This research is partially supported by the R.F.B.R. grant {\rus N0} 16-01-00818.}

\begin{abstract}
Let $\g$ be a simple Lie algebra with a Borel subalgebra $\be$ and $\Ab$ the set of abelian ideals of $\be$. Let $\Delta^+$ be the corresponding set of positive roots. 
We continue our study of combinatorial properties of the partition of $\Ab$ parameterised by the long positive roots. 
In particular, the union of an arbitrary set of maximal abelian ideals is described, if $\g\ne\sln$.
We also characterise the greatest lower bound of two positive roots, when it 
exists, and point out interesting subposets of $\Delta^+$ that are modular lattices. 
\end{abstract}

\maketitle

\section*{Introduction}

\noindent
Let $\g$ be a simple Lie algebra over $\BC$, with a  triangular decomposition 
$\g=\ut\oplus\te\oplus \ut^-$. Here $\te$ is a Cartan and $\be=\ut\oplus\te$ is a fixed Borel 
subalgebra.  The theory of abelian ideals of $\be$ is based on their relationship, due to D.~Peterson, with 
the {\it minuscule elements\/} of the affine Weyl group $\HW$ (see Kostant's account in~\cite{ko98}; another approach is presented in \cite{cp1}). 
In this note, we elaborate on some topics related to the combinatorial theory of abelian ideals, which can be regarded as a sequel to \cite{jems}. We mostly work in the combinatorial setting, i.e., the abelian ideals of $\be$, which are sums of 
root spaces of $\ut$, are identified with the corresponding sets of positive roots. 

Let $\Delta$ be the root system of $(\g,\te)$ in the vector space $V=\te_\BR^*$, $\Delta^+$ the set of 
positive roots in $\Delta$ corresponding to $\ut$,  $\Pi$ the set of simple roots in $\Delta^+$, and $\theta$  
the {highest root} in  $\Delta^+$. Then $W$ is the Weyl group and $(\ ,\ )$ is a $W$-invariant scalar 
product on $V$. We equip  $\Delta^+$  with the usual partial ordering `$\curge$'. An {\it upper ideal\/} (or 
just an {\it ideal}) of $(\Delta^+,\curge)$ is a subset $I\subset \Delta^+$ such that if
$\gamma\in I, \nu\in\Delta^+$, 
and $\nu+\gamma\in\Delta^+$, then $\nu+\gamma\in I$. An upper ideal $I$ is {\it abelian}, if
$\gamma'+\gamma''\not\in \Delta^+$ for all $\gamma',\gamma''\in I$. The set of minimal elements of $I$ is 
denoted by $\min(I)$. It also makes sense to consider the maximal elements of the complement of $I$, 
denoted $\max(\Delta^+\setminus I)$. 

Write $\Ab$ (resp. $\AD$) for the set of all abelian (resp. all upper) ideals of $\Delta^+$ and 
think of them as posets with respect to inclusion. The upper ideal {\it generated by\/} $\gamma$ is
$I\langle{\curge}\gamma\rangle=\{\nu\in\Delta^+\mid \nu\curge \gamma\}$. Then
$\min (I\langle{\curge}\gamma\rangle)=\{\gamma\}$. A root $\gamma\in\Delta^+$ is said to be {\it commutative}, if $I\langle{\curge}\gamma\rangle\in\Ab$. Write $\Delta^+_{\sf com}$ for the set of all commutative roots.
This notion was introduced in \cite{jac06}, and the subset  $\Delta^+_{\sf com}$ for each $\Delta$ is explicitly described in~\cite[Theorem\,4.4]{jac06}. Note that $\Delta^+_{\sf com}\in \AD$.

Let $\Abo$ denote the set of nonempty abelian ideals and $\Delta^+_l$  the set of long positive roots.  
In~\cite[Sect.\,2]{imrn}, we defined a mapping $\tau: \Abo \to \Delta^+_l$, which is onto. 
Letting $\Ab_\mu=\tau^{-1}(\mu)$, we get a partition of $\Abo$ parameterised by $\Delta^+_l$. Each 
$\Ab_\mu$ is  a subposet of $\Ab$ and, moreover, $\Ab_\mu$ has a unique minimal and unique 
maximal element (ideal)~\cite[Sect.\,3]{imrn}. These extreme abelian ideals in $\Ab_\mu$  
are denoted by $I(\mu)_{\sf min}$ and $I(\mu)_{\sf max}$. Then  
$\{I(\ap)_{\sf max}\mid \ap\in\Pi_l\}$ are exactly the maximal abelian ideals of $\be$.
 
In this article, we first establish a property of $(\Delta^+,\curge)$, which seems to be new. It was proved 
in~\cite[Appendix]{jems} that,  for any $\eta_1,\eta_2\in \Delta^+$, there exists the {\it least upper bound}, 
denoted $\eta_1\vee\eta_2$. Moreover, an explicit formula for $\eta_1\vee\eta_2$ is also given. Here 
we prove that the {\it greatest lower bound}, $\eta_1\wedge\eta_2$, exists if and only if 
$\supp(\gamma_1)\cap\supp(\gamma_2)\ne\varnothing$. Furthermore, if 
$\eta_i=\sum_{\ap\in\Pi} c_{i\ap}\ap$, then 
$\eta_1\wedge\eta_2=\sum_{\ap\in\Pi} \min\{c_{1\ap},c_{2\ap}\}\ap$.
This also implies that $I\langle{\curge}\eta\rangle$ is a modular lattice for any $\eta\in\Delta^+$, see
Theorem~\ref{thm:inf=min}. Another example a modular lattice inside $\Delta^+$ is the subposet
$\Delta_\ap(i)=\{\gamma\in\Delta^+\mid  \hot_\ap(\gamma)=i\}$, where $\ap\in\Pi$ and $\hot_\ap(\gamma)$ is the coefficient of $\ap$ in the expression of $\gamma$ via $\Pi$.

Using properties of `$\vee$' and `$\wedge$' and $\BZ$-gradings of $\g$, we prove uniformly that if 
$\Delta$ is not of type $\GR{A}{n}$, then $\Delta^+_{\sf nc}:=\Delta^+\setminus \Delta^+_{\sf com}$ has 
the unique maximal element, which is 
$\lfloor\theta/2\rfloor :=\sum_{\ap\in\Pi} \lfloor\hot_\ap(\theta)/2\rfloor \ap$, see 
Section~\ref{sect:Z-gr-&-nc}. In particular, $\lfloor\theta/2\rfloor $ is a root. (Note that if $\Delta$ is of 
type $\GR{A}{n}$, then $\lfloor\theta/2\rfloor =0$ and $\Delta^+_{\sf nc}=\varnothing$.) We also describe
the maximal abelian ideals $I(\ap)_{\sf max}$ if $\hot_\ap(\theta)$ is odd.

In Section~\ref{sect:subsets-of-Pi_l}, we study the sets of maximal and minimal elements related to
abelian ideals of the form $I(\ap)_{\sf min}$ and $I(\ap)_{\sf max}$, with $\ap\in\Pi_l:=\Pi\cap\Delta^+_l$. 
\begin{thm}    \label{thm:main2}
If $S\subset \Pi_l$ is arbitrary and $\Delta$ is not of type $\GR{A}{n}$, then there is the bijection
\[
 \eta\in \min\bigl( \bigcap_{\ap\in S}I(\ap)_{\sf min}\bigr)   \stackrel{1:1}{\longmapsto} 
 \eta'=\theta-\eta \in \max \bigl(\Delta^+\setminus \bigcup_{\ap\in S} I(\ap)_{\sf max}\bigr) .
\] 
\end{thm}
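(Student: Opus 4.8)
The plan is to recognise $\eta\mapsto\theta-\eta$ as an order-reversing involution of a single lattice in which both sides of the asserted bijection sit. First I would single out the simple root attached to $\theta$. Since $\g\ne\sln$, exactly one $\alpha_0\in\Pi$ satisfies $(\alpha_0,\theta^\vee)\ne 0$, and then $(\alpha_0,\theta^\vee)=1$ with $\hot_{\alpha_0}(\theta)=2$; consequently $(\gamma,\theta^\vee)=\hot_{\alpha_0}(\gamma)\in\{0,1,2\}$ for every $\gamma\in\Delta^+$, with value $2$ only for $\gamma=\theta$. Put $\Phi:=\Delta_{\alpha_0}(1)=\{\gamma:\hot_{\alpha_0}(\gamma)=1\}$. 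The $\theta$-string argument gives $\theta-\gamma\in\Delta^+$ iff $\gamma\in\Phi$, and $\gamma\mapsto\theta-\gamma$ is then an order-reversing involution of $\Phi$. Two features of $\Phi$ are crucial: every root of $\Phi$ has $\alpha_0$ in its support, so by Theorem~\ref{thm:inf=min} any two elements of $\Phi$ have a greatest lower bound equal to their coordinatewise minimum, which again lies in $\Phi$; applying the involution, the least upper bound in $\Phi$ is the coordinatewise maximum. Thus $\Phi$ is closed under $\wedge$ and $\vee$, and $\theta-(\cdot)$ interchanges them.

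Next I would import the per-root structure of \cite{imrn}. For $\ap\in\Pi_l$ set $J_\ap:=I(\ap)_{\sf max}\cap\Phi$. The inputs I need are that $I(\ap)_{\sf min}$ and $I(\ap)_{\sf max}$ have the same trace on $\Phi$, that $I(\ap)_{\sf min}$ contains no root with $\hot_{\alpha_0}=0$ (so $I(\ap)_{\sf min}=J_\ap\sqcup\{\theta\}$), and the ``Lagrangian'' identity $\theta-J_\ap=\Phi\setminus J_\ap$. One inclusion of the last identity is free, since an abelian ideal cannot contain both $\eta$ and $\theta-\eta$ (their sum $\theta$ would be a root in it); the reverse inclusion is the maximality of $I(\ap)_{\sf max}$. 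Granting these, $\bigcap_{\ap\in S}I(\ap)_{\sf min}=(\bigcap_{\ap\in S}J_\ap)\sqcup\{\theta\}$, while $\theta-(\bigcap_{\ap\in S}J_\ap)=\Phi\setminus\bigcup_{\ap\in S}J_\ap$, and this last set is exactly $\Phi\cap(\Delta^+\setminus\bigcup_{\ap\in S}I(\ap)_{\sf max})$.

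It then remains to check that the order-extremal elements on each side lie in $\Phi$, and here the lattice structure does the work. For the intersection I would first observe $\bigcap_{\ap\in S}J_\ap\ne\varnothing$: the unique minimal element of $\Phi$ is $\alpha_0$ (every $\gamma\in\Phi$ has $\gamma\curge\alpha_0$), hence the unique maximal element is $\eta_{\sf top}:=\theta-\alpha_0$; and $\alpha_0$ lies in no abelian ideal, because $\eta_{\sf top}\curge\alpha_0$ forces both into any ideal containing $\alpha_0$, with sum $\theta$. By the Lagrangian identity $\eta_{\sf top}\in J_\ap$ for all $\ap$. As $\bigcap_{\ap\in S}I(\ap)_{\sf min}$ has no $\hot_{\alpha_0}=0$ root and $\theta$ on top, its minimal elements are $\min_\Phi(\bigcap_{\ap\in S}J_\ap)$. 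Dually, any maximal element of $\Delta^+\setminus\bigcup_{\ap\in S}I(\ap)_{\sf max}=\bigcap_{\ap\in S}(\Delta^+\setminus I(\ap)_{\sf max})$ is a greatest lower bound $\bigwedge_{\ap\in S}n_\ap$ with $n_\ap\in\max(\Delta^+\setminus I(\ap)_{\sf max})=\theta-\min(I(\ap)_{\sf min})\subseteq\Phi$; since all $n_\ap$ contain $\alpha_0$, this meet exists, equals the coordinatewise minimum, and lies in $\Phi$, giving $\max(\Delta^+\setminus\bigcup_{\ap\in S}I(\ap)_{\sf max})=\max_\Phi(\Phi\setminus\bigcup_{\ap\in S}J_\ap)$. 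Finally the order-reversing involution $\gamma\mapsto\theta-\gamma$ of $\Phi$ carries $\min_\Phi(\bigcap_{\ap\in S}J_\ap)$ bijectively onto $\max_\Phi(\theta-\bigcap_{\ap\in S}J_\ap)=\max_\Phi(\Phi\setminus\bigcup_{\ap\in S}J_\ap)$, which is the claimed bijection.

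The main obstacle is precisely this reduction to $\Phi$, i.e.\ ruling out $\hot_{\alpha_0}=0$ roots as extremal elements on either side; the clean route is to express the extremal elements as joins and meets of the per-root data and to use that every element of $\Phi$ shares the support root $\alpha_0$, so that all relevant meets exist and preserve $\hot_{\alpha_0}=1$. This is exactly where $\g\ne\sln$ is indispensable: in type $\GR{A}{n}$ there are two simple roots non-orthogonal to $\theta$, $\Phi$ has two maximal elements, $\bigcap_{\ap\in S}J_\ap$ can be empty, and the statement indeed fails (already for $S=\Pi_l$, where $\bigcap_{\ap}I(\ap)_{\sf min}=\{\theta\}$). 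I would also need to quote from \cite{imrn} the single-root case — that $\min(I(\ap)_{\sf min})$ and $\max(\Delta^+\setminus I(\ap)_{\sf max})$ lie in $\Phi$ and correspond under $\theta-(\cdot)$ — which feeds the meet computation above.
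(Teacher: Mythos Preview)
Your argument is correct and is a genuinely different, more lattice-theoretic route than the paper's. The paper first proves Theorem~\ref{thm:main1} (that every element of $\max\bigl(\Delta^+\setminus\bigcup_{\ap\in S}I(\ap)_{\sf max}\bigr)$ lies in $\gH$) by a case split according to whether $\gamma$ is commutative or not, and then deduces Theorem~\ref{thm:main2} from it. You bypass that case split by quoting the single-root case as a black box: for each $\ap\in S$ you dominate $\gamma$ by some $n_\ap\in\max(\Delta^+\setminus I(\ap)_{\sf max})\subset\Phi$, observe that all $n_\ap$ share $\ap_\theta$ in their support so $\bigwedge_{\ap\in S}n_\ap$ exists, equals the coordinatewise minimum, and stays in $\Phi$, and then maximality forces $\gamma=\bigwedge n_\ap\in\Phi$. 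This is clean and yields Theorem~\ref{thm:main1} as a corollary rather than a prerequisite. The trade-off is that you must import the $\#S=1$ result from \cite{jems} (not \cite{imrn}, by the way---it is Theorem~4.7 there, restated here as Theorem~\ref{thm:4.7-jems}), whereas the paper's proof of Theorem~\ref{thm:main1} is self-contained.

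One point deserves tightening. Your justification of the reverse inclusion in the ``Lagrangian identity'' $\theta-J_\ap=\Phi\setminus J_\ap$ via ``maximality of $I(\ap)_{\sf max}$'' is not immediate: maximality tells you that enlarging $I(\ap)_{\sf max}$ by $\eta$ produces a non-abelian ideal, but extracting $\theta-\eta\in I(\ap)_{\sf max}$ from that requires more. The painless argument is the cardinality count already recorded in Section~\ref{sect:1}: $\#(I(\ap)_{\sf min}\setminus\{\theta\})=h^*-2=\tfrac12\#\Phi$, and since no abelian ideal contains both $\eta$ and $\theta-\eta$, exactly one of each pair lies in $J_\ap$ (cf.\ \cite[Lemma~3.3]{jems}). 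With that fix, everything you wrote goes through.
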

\noindent
Our proof is conceptual and relies on the fact $\theta$ is a multiple of a fundamental weight if
$\Delta$ is not of type $\GR{A}{n}$. For $\GR{A}{n}$, the same bijection holds if $S$ is a {\bf connected} subset on the Dynkin diagram. The case in which $\#S=1$ was considered earlier 
in~\cite[Theorem\,4.7]{jems}.
This has some interesting consequences if $S=\Pi_l$ and hence $\bigcup_{\ap\in \Pi_l} I(\ap)_{\max}=
\Delta^+_{\sf com}$, see Proposition~\ref{prop:ap-br}.

In Section~\ref{sect:interval}, we describe the interval $[\thi, \theta-\thi]$ inside the poset $\Delta^+$.

\section{Preliminaries}    \label{sect:1}

\noindent
We have $\Pi=\{\ap_1,\dots,\ap_n\}$, the vector space $V=\oplus_{i=1}^n{\mathbb R}\ap_i$, the  Weyl 
group $W$ generated by  simple reflections $s_\ap$ ($\ap\in\Pi$), and a $W$-invariant inner product 
$(\ ,\ )$ on $V$. Set $\rho=\frac{1}{2}\sum_{\nu\in\Delta^+}\nu$. The partial ordering `$\curle$' in 
$\Delta^+$ is defined by the rule  that $\mu\curle\nu$ if $\nu-\mu$ is a non-negative integral linear 
combination of simple roots. Write $\mu\prec\nu$, if $\mu\curle \nu$ and $\mu\ne \nu$.
If $\mu=\sum_{i=1}^n c_i\ap_i\in\Delta$, then $\hot_{\ap_i}(\mu):=c_i$, $\hot(\mu):=\sum_{i=1}^n c_i$ 
and $\supp(\mu)=\{\ap_i\in\Pi\mid  c_i\ne 0\}$.

The Heisenberg ideal $\gH:=\{\gamma\in \Delta^+ \mid (\gamma, \theta)\ne 0\}=
\{\gamma\in \Delta^+ \mid (\gamma, \theta)> 0\}\in\AD$ plays a prominent role in the theory of abelian 
ideals and posets $\Ab_\mu=\tau^{-1}(\mu)$. 
\\ \indent
Let us collect  some known results that are frequently used below.
\begin{itemize}
\item If $I\in\AD$ is not abelian, then there exist $\eta,\eta'\in I$ such that $\eta+\eta'=\theta$,
see \cite[p.\,1897]{imrn}. Therefore, $I\not\in\Ab$ if and only if $I\cap\gH\not\in\Ab$.

\item $I=I(\mu)_{\sf min}$ for some $\mu\in\Delta^+_l$  if and only if 
$I\subset  \gH$~\cite[Theorem\,4.3]{imrn};

\item $\# I(\mu)_{\sf min}=(\rho,\theta^\vee-\mu^\vee)+1$~\cite[Theorem\,4.2(4)]{imrn};

 
\item For $I\in\Abo$, we have $I\in\Ab_\mu$ if and only if $I\cap\gH=I(\mu)_{\sf min}$~\cite[Prop.\,3.2]{jems};
\item The set of (globally) maximal abelian ideals is $\{I(\ap)_{\sf max}\mid \ap\in\Pi_l\}$~\cite[Corollary\,3.8]{imrn}.
\item For any $\mu\in\Delta^+_l$, there is a unique element of minimal length in $W$ that takes $\theta$ to 
$\mu$~\cite[Theorem\,4.1]{imrn}. Writing $w_\mu$ for this element, one has 
$\ell(w_\mu)=(\rho,\theta^\vee-\mu^\vee)$~\cite[Theorem\,4.1]{imrn}.
\item Let $\gN(w)$ be the inversion set of $w\in W$. By \cite[Lemma\,1.1]{mics},
\[
   I(\mu)_{\sf min}=\{\theta\}\cup \{\theta-\gamma\mid \gamma\in \gN(w_\mu)\}.
\]
\end{itemize}

\noindent
For each $\eta\in \gH\setminus\{\theta\}$
there is a unique $\eta'\in \gH\setminus\{\theta\}$ such that $\eta+\eta'$ is a root, and this root is $\theta$.
It is well known that $\#\gH=2(\rho,\theta^\vee)-1=2h^*-3$, where $h^*$ is the {\it dual Coxeter number\/} 
of $\Delta$. Since $\#I(\ap)_{\sf min}=(\rho,\theta^\vee)=h^*-1$ for $\ap\in\Pi_l$, the ideal 
$I(\ap)_{\sf min}$ contains $\theta$ and exactly a half of elements of $\gH\setminus \{\theta\}$, cf. 
also~\cite[Lemma\,3.3]{jems}.

Although the affine Weyl group and minuscule elements are not 
explicitly used in this paper, their use is hidden in properties of the posets $\Ab_\mu$, $\mu\in\Delta^+_l$, 
and ideals $I(\mu)_{\sf min}$, $I(\mu)_{\sf max}$. Important properties of the maximal abelian ideals are also obtained in \cite{cp3,suter}.

We refer to \cite{bour}, \cite[\S\,3.1]{t41} for standard results on root systems and Weyl groups and 
to~\cite[Chapter\,3]{stan} for posets. 

\section{The greatest lower bound in $\Delta^+$}

\noindent
It is proved in \cite[Appendix]{imrn} that the poset $(\Delta^+,\curge)$ is a join-semilattice. i.e., for any pair 
$\eta,\eta'\in \Delta^+$,  there is the least upper bound (= {\it join}), denoted  $\eta\vee\eta'$. Furthermore, 
there is a simple explicit formula for `$\vee$', see \cite[Theorem~A.1]{imrn}. However, $\Delta^+$ is not a 
meet-semilattice. We prove below that under a natural constraint the greatest lower bound (= {\it meet}) 
exists and can explicitly be described.
Afterwards, we provide some applications of this property in the theory of abelian ideals.

\begin{df}  Let $\eta,\eta'\in \Delta^+$. The root $\nu$ is the greatest lower bound (or {\it meet}) of $\eta$ 
and $\eta'$ if 
\\ \indent 
\textbullet \quad $\eta\curge \nu$, $\eta'\curge\nu$;
\\ \indent 
\textbullet \quad if $\eta\curge \kappa$ and $\eta'\curge \kappa$, then $\nu\curge\kappa$. 
\\
The meet of $\eta$ and $\eta'$, if it exists, is denoted by  $\eta\wedge\eta'$.
\end{df}

Obviously, if $\ap,\ap'\in\Pi$, then their meet does not exist. But as we see below, the only reason for such a failure is that their supports are disjoint.

\begin{lm}[see {\cite[Lemma\,3.1]{adv01}}] Suppose that $\gamma\in\Delta^+$ and $\ap,\beta\in\Pi$.
If $\gamma-\ap, \gamma-\beta\in\Delta^+$, then either $\gamma-\ap-\beta\in \Delta^+$
or $\gamma=\ap+\beta$ and hence $\ap,\beta$ are adjacent in the Dynkin diagram.
\end{lm}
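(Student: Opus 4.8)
The plan is to prove the contrapositive of the dichotomy: assuming that $\gamma-\ap-\beta$ is neither a root nor $0$, I will derive a contradiction, which forces $\gamma-\ap-\beta\in\Delta\cup\{0\}$; the two stated alternatives then drop out immediately. First observe that, since $\gamma-\ap,\gamma-\beta\in\Delta^+$, both coefficients $\hot_\ap(\gamma)$ and $\hot_\beta(\gamma)$ are at least $1$, so every coefficient of $\gamma-\ap-\beta$ is non-negative. Hence if $\gamma-\ap-\beta$ happens to be a root it is automatically positive, while $\gamma-\ap-\beta=0$ means exactly $\gamma=\ap+\beta$. We may also assume $\ap\ne\beta$, since the conclusion concerns distinct simple roots. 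Throughout I use the standard fact that for $\mu,\nu\in\Delta$ with $(\mu,\nu)>0$ one has $\mu-\nu\in\Delta\cup\{0\}$ \cite{bour}.

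Suppose then $\gamma-\ap-\beta\notin\Delta\cup\{0\}$. Applying the quoted fact to the pair $\gamma-\ap,\beta\in\Delta$ shows that $(\gamma-\ap,\beta)>0$ would force $\gamma-\ap-\beta\in\Delta\cup\{0\}$; hence $(\gamma-\ap,\beta)\le 0$, that is, $(\gamma,\beta)\le(\ap,\beta)$. Symmetrically, applying it to $\gamma-\beta,\ap$ gives $(\gamma,\ap)\le(\ap,\beta)$.

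The key computation is now to estimate $(\gamma-\ap,\gamma-\beta)$. Expanding and using the two bounds just obtained, together with $(\ap,\beta)\le 0$ for distinct simple roots,
\[
(\gamma-\ap,\gamma-\beta)=(\gamma,\gamma)-(\gamma,\beta)-(\gamma,\ap)+(\ap,\beta)\ge (\gamma,\gamma)-(\ap,\beta)\ge(\gamma,\gamma)>0 .
\]
Since $\gamma-\ap$ and $\gamma-\beta$ are roots with positive inner product, the quoted fact yields $(\gamma-\ap)-(\gamma-\beta)=\beta-\ap\in\Delta\cup\{0\}$. But $\beta-\ap$ is a nonzero integral combination of simple roots having both a positive and a negative coefficient, so it is neither $0$ nor a root --- a contradiction. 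Therefore $\gamma-\ap-\beta\in\Delta\cup\{0\}$, giving either $\gamma-\ap-\beta\in\Delta^+$ or $\gamma=\ap+\beta$; in the latter case $(\ap,\beta)\ne 0$, for otherwise $\ap+\beta$ would not be a root, so $\ap$ and $\beta$ are adjacent in the Dynkin diagram.

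The only genuine obstacle is locating the single inequality that carries the argument. A more pedestrian route --- analysing the $\ap$-string through $\gamma-\beta$ via the Cartan integers $\langle\gamma,\ap^\vee\rangle$, $\langle\beta,\ap^\vee\rangle$ and bounding root-string lengths --- splinters into cases according to whether $\ap,\beta$ are orthogonal, or adjacent through a single or a multiple bond, and is tedious to close uniformly. The merit of the argument above is that it replaces all of that by the single observation that $\beta-\ap$ can never be a root; the hypotheses $\gamma-\ap,\gamma-\beta\in\Delta^+$ are then precisely what is needed to force $(\gamma-\ap,\gamma-\beta)>0$ once $\gamma-\ap-\beta$ is assumed not to be a root, and that positivity is immediately incompatible with $\beta-\ap$ being a root.
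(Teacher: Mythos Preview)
Your argument is correct. The paper itself does not supply a proof of this lemma; it merely cites \cite[Lemma\,3.1]{adv01}, so there is no in-paper proof to compare against. Your self-contained argument is valid: the two inequalities $(\gamma,\beta)\le(\ap,\beta)$ and $(\gamma,\ap)\le(\ap,\beta)$ follow exactly as you say from the standard fact about roots with positive inner product, the chain of inequalities for $(\gamma-\ap,\gamma-\beta)$ is algebraically correct (one gets $\ge(\gamma,\gamma)-(\ap,\beta)\ge(\gamma,\gamma)>0$ using $(\ap,\beta)\le 0$), and the punchline that $\beta-\ap$ cannot be a root because it has coefficients of both signs is the decisive observation. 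One small point: the lemma is tacitly stated for $\ap\ne\beta$ (otherwise the second alternative $\gamma=2\ap$ is impossible while the first can fail), and you handle this correctly by noting it at the outset.
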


\begin{lm}[see {\cite[Lemma\,3.2]{jlt16}}]  Suppose that $\gamma\in\Delta^+$ and $\ap,\beta\in\Pi$. 
If  $\gamma+\ap, \gamma+\beta\in \Delta^+$, then $\gamma+\ap+\beta\in\Delta^+$.
\end{lm}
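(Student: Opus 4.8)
The plan is to argue by $\mathfrak{sl}_2$-theory, i.e.\ by root strings, together with the preceding lemma. Throughout I take $\ap\ne\beta$ (distinct simple roots, as is implicit in the statement, since for $\ap=\beta$ one cannot expect $\gamma+2\ap$ to be a root). First I would record two consequences of $\gamma\in\Delta^+$: the root $\gamma+\beta$ is \emph{not} proportional to $\ap$, because $\gamma+\beta=\ap$ would force $\gamma=\ap-\beta\notin\Delta$, while $\gamma+\beta=-\ap$ would force $\gamma=-\ap-\beta\notin\Delta^+$; symmetrically $\gamma+\ap\ne\pm\beta$. Hence the $\ap$-string through the root $\gamma+\beta$, and the $\beta$-string through the root $\gamma+\ap$, are genuine (non-degenerate) root strings, so the rule $p-q=\langle\,\cdot\,,\ap^\vee\rangle$ applies to them.

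The main step is to observe that it suffices to establish $\langle\gamma+\beta,\ap^\vee\rangle<0$ \emph{or} $\langle\gamma+\ap,\beta^\vee\rangle<0$. Indeed, if $\langle\gamma+\beta,\ap^\vee\rangle\le-1$, then writing the $\ap$-string through $\gamma+\beta$ as $\{(\gamma+\beta)-p\ap,\dots,(\gamma+\beta)+q\ap\}$ with $p-q=\langle\gamma+\beta,\ap^\vee\rangle$, we get $q\ge p+1\ge1$, so $(\gamma+\beta)+\ap$ is a root; being a nonnegative combination of simple roots, it lies in $\Delta^+$. The symmetric computation handles the other inequality.

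The main obstacle, and the heart of the argument, is to rule out the \emph{bad case} in which both Cartan integers are nonnegative, $\langle\gamma+\beta,\ap^\vee\rangle\ge0$ and $\langle\gamma+\ap,\beta^\vee\rangle\ge0$. Since $\langle\beta,\ap^\vee\rangle\le0$ and $\langle\ap,\beta^\vee\rangle\le0$ for distinct simple roots, these force $\langle\gamma,\ap^\vee\rangle\ge0$ and $\langle\gamma,\beta^\vee\rangle\ge0$; combined with the fact that the $\ap$- and $\beta$-strings through $\gamma$ already extend upward (as $\gamma+\ap,\gamma+\beta\in\Delta^+$), this makes $\gamma-\ap$ and $\gamma-\beta$ roots, and a coefficient/positivity check puts them in $\Delta^+$. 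Now I would invoke the preceding lemma: either $\gamma=\ap+\beta$, or $\gamma-\ap-\beta\in\Delta^+$.

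I would dispose of the two alternatives separately. If $\gamma=\ap+\beta$, then $\gamma+\ap=2\ap+\beta$ and $\gamma+\beta=\ap+2\beta$ would be simultaneous roots, which never happens in the rank-two subsystem generated by $\ap,\beta$ (of type $\GR{A}{1}\times\GR{A}{1}$, $\GR{A}{2}$, $\GR{B}{2}$, or $\GR{G}{2}$), a contradiction. If $\gamma-\ap-\beta\in\Delta^+$, I would finish with a length count. In the simply-laced case one always has $\langle\gamma,\ap^\vee\rangle=-1$ whenever $\gamma,\ap,\gamma+\ap$ are roots, contradicting $\langle\gamma,\ap^\vee\rangle\ge0$. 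In the non-simply-laced case, the identity $|\gamma+\ap|^2=|\gamma|^2+(1+\langle\gamma,\ap^\vee\rangle)|\ap|^2$ and its $\beta$-analogue force, away from $\GR{G}{2}$, that $\gamma,\ap,\beta$ are pairwise orthogonal short roots; then $|\gamma-\ap-\beta|^2$ equals three times the short squared-length, which is not a root length, contradicting $\gamma-\ap-\beta\in\Delta^+$. Finally $\GR{G}{2}$ is settled by inspecting its six positive roots directly: none of them can play the role of $\gamma$ with both $\gamma+\ap_1,\gamma+\ap_2$ roots, so the lemma holds vacuously there. This excludes the bad case and completes the proof.
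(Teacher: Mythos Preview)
Your argument is correct. The paper does not give its own proof of this lemma; it is quoted from \cite[Lemma\,3.2]{jlt16}, so there is nothing to compare against directly.

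Two minor points worth tightening. First, your simply-laced observation that $\langle\gamma,\ap^\vee\rangle=-1$ whenever $\gamma+\ap\in\Delta$ already rules out the entire ``bad case'' in types $\GR{A-D-E}{}$, independent of the dichotomy produced by the preceding lemma; placing it before the case split would streamline the logic. Second, in the $\GR{B-C-F}{}$ step, the length identity by itself only yields $\gamma\perp\ap$ and $\gamma\perp\beta$. To obtain $\ap\perp\beta$ you should make explicit the extra step: from $\langle\gamma+\beta,\ap^\vee\rangle\ge 0$ and the already established $\langle\gamma,\ap^\vee\rangle=0$ one gets $\langle\beta,\ap^\vee\rangle\ge 0$, hence $=0$ since $\ap,\beta$ are distinct simple roots. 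With that, $|\gamma-\ap-\beta|^2=3s$ indeed fails to be a root length.
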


Let us provide a reformulation of these lemmata in terms of `$\vee$' and `$\wedge$'. To this end, we note
that in the previous lemma, $(\gamma+\ap)\wedge( \gamma+\beta)=\gamma$.

\begin{prop}    \label{prop:vee-and-wedge}
Let $\eta_1,\eta_2\in \Delta^+$.
\begin{itemize}
\item[{\sf (i)}]  \ If $\eta_1\vee\eta_2$ covers both $\eta_1$ and $\eta_2$, then
either $\eta_1\vee\eta_2=\ap+\beta=\eta_1+\eta_2$ for some adjacent $\ap,\beta\in\Pi$, or
$\eta_1$ and $\eta_2$ both cover $\eta_1\wedge \eta_2$;
\item[{\sf (ii)}] \  If $\eta_1\wedge \eta_2$ exists and $\eta_1$ and $\eta_2$ both cover $\eta_1\wedge \eta_2$, 
then $\eta_1\vee\eta_2$ covers both $\eta_1$ and $\eta_2$.
\end{itemize}
\end{prop}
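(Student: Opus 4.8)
The plan is to treat parts {\sf (i)} and {\sf (ii)} as direct reformulations of the two lemmata above, using throughout the elementary fact that in $(\Delta^+,\curge)$ an element $\sigma$ covers an element $\eta$ precisely when $\sigma-\eta\in\Pi$; this follows from the standard observation that any two comparable positive roots are joined by a saturated chain whose successive differences are simple roots. Consequently every covering hypothesis will immediately present the relevant difference as a single simple root, and the task reduces to invoking the appropriate lemma and then pinning down the join or the meet by comparing coefficients in the basis $\Pi$.

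For {\sf (i)} I would set $\sigma:=\eta_1\vee\eta_2$ and write $\eta_1=\sigma-\ap$ and $\eta_2=\sigma-\beta$ with $\ap,\beta\in\Pi$. Since $\sigma$ covers each $\eta_i$ we have $\sigma\ne\eta_1$, so $\eta_1\ne\eta_2$ and therefore $\ap\ne\beta$. Applying the first lemma (that of \cite{adv01}) with $\gamma=\sigma$ yields the dichotomy: either $\sigma=\ap+\beta$, and then $\eta_1=\beta$, $\eta_2=\ap$, so $\eta_1\vee\eta_2=\ap+\beta=\eta_1+\eta_2$ with $\ap,\beta$ adjacent (the first alternative); or $\nu:=\sigma-\ap-\beta\in\Delta^+$. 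In the second case $\nu\curle\eta_1$, $\nu\curle\eta_2$ with $\eta_1-\nu=\beta$ and $\eta_2-\nu=\ap$, so each $\eta_i$ covers $\nu$, and it remains only to check $\nu=\eta_1\wedge\eta_2$. For this I would compare coordinates directly: an arbitrary common lower bound $\kappa\in\Delta^+$ satisfies $\hot_\ap(\kappa)\le\hot_\ap(\eta_1)=\hot_\ap(\sigma)-1$, $\hot_\beta(\kappa)\le\hot_\beta(\eta_2)=\hot_\beta(\sigma)-1$, and $\hot_{\ap'}(\kappa)\le\hot_{\ap'}(\sigma)$ for the remaining $\ap'\in\Pi$, which is exactly $\kappa\curle\nu$. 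Hence $\nu=\eta_1\wedge\eta_2$ and both $\eta_i$ cover it (the second alternative).

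For {\sf (ii)} I would set $\mu:=\eta_1\wedge\eta_2$ and write $\eta_1=\mu+\ap$, $\eta_2=\mu+\beta$ with $\ap,\beta\in\Pi$; as before the covering hypothesis forces $\ap\ne\beta$. The second lemma (that of \cite{jlt16}) applied with $\gamma=\mu$ gives $\sigma:=\mu+\ap+\beta\in\Delta^+$, a common upper bound that covers both $\eta_1$ and $\eta_2$. Since the join $j:=\eta_1\vee\eta_2$ exists and is the least upper bound, $\eta_1\curle j\curle\sigma=\eta_1+\beta$, so $j-\eta_1$ is a root-lattice vector lying coordinatewise between $0$ and $\beta$; thus $j-\eta_1\in\{0,\beta\}$. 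The value $0$ is excluded, because $j=\eta_1$ would give $\eta_2\curle\eta_1$, contradicting $\hot_\beta(\eta_1)=\hot_\beta(\mu)<\hot_\beta(\eta_2)$. Therefore $j=\eta_1+\beta=\sigma$ covers both $\eta_1$ and $\eta_2$.

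I expect the only delicate ingredient to be this ``coefficient sandwich'': once one knows $x\curle y\curle x+\ap$ for some $\ap\in\Pi$, the difference $y-x$ lies in the root lattice and is coordinatewise between the corresponding coordinates of $0$ and of $\ap$, hence equals $0$ or $\ap$. Beyond this, the main care is bookkeeping, namely ruling out $\ap=\beta$ in each part and verifying that the candidate $\nu$ (resp.\ $\sigma$) is genuinely the meet (resp.\ the join) rather than merely a bound, which is precisely what the coordinate comparison supplies. Note that this route uses only the two lemmata and elementary facts about $\curge$, so it is logically independent of the explicit formula for $\eta_1\wedge\eta_2$.
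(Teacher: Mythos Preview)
Your proof is correct and follows exactly the route the paper intends: the paper presents Proposition~\ref{prop:vee-and-wedge} as a ``reformulation'' of the two preceding lemmata and gives no separate proof, merely noting that $(\gamma+\ap)\wedge(\gamma+\beta)=\gamma$ in the second lemma. Your write-up supplies precisely the details the paper leaves implicit---in particular the coordinatewise check that the candidate $\nu$ (resp.\ $\sigma$) is the genuine meet (resp.\ join) and not merely a bound---so there is nothing to correct or compare.
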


For any two roots $\eta=\sum_{\ap\in \Pi}c_\ap \ap$ and $\eta'=\sum_{\ap\in \Pi}c'_\ap \ap$, one defines 
two elements of the root lattice, $\min (\eta,\eta')=\sum_{\ap\in \Pi} \min\{c_\ap,c'_\ap\}\ap$
and $\max (\eta,\eta')=\sum_{\ap\in \Pi} \max\{c_\ap,c'_\ap\}\ap$.
Recall that the poset $(\Delta^+,\curge)$ is graded and the rank function is the usual {\it height\/} of a root,
i.e., $\hot(\eta)=\sum_{\ap\in\Pi}c_\ap$. We also set $\hot_\ap(\eta):=c_\ap$.

\begin{thm}   \label{thm:inf=min}   \leavevmode\par
\begin{itemize}
\item[\sf 1)]  \ For any $\gamma\in \Delta^+$, the upper ideal $I\langle{\curge}\gamma\rangle$ is a modular lattice;
\item[\sf 2)]  \ the meet $\gamma_1\wedge\gamma_2$ exists if and only if\/ $\supp(\gamma_1)\cap\supp(\gamma_2)\ne\varnothing$. In this case, one has
$\gamma_1\wedge\gamma_2=\min(\gamma_1,\gamma_2)$.
\end{itemize}
\end{thm}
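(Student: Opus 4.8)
The plan is to prove both statements together, extracting the lattice structure from an explicit candidate for the meet. The natural guess is that $\gamma_1\wedge\gamma_2=\min(\gamma_1,\gamma_2)$ whenever the latter is a nonzero element of $\Delta^+$, and that this happens precisely when the supports overlap. So the first and main task is to show that $\min(\gamma_1,\gamma_2)$ is actually a root (not merely an element of the root lattice) as soon as $\supp(\gamma_1)\cap\supp(\gamma_2)\ne\varnothing$. Once that is established, verifying the meet axioms is almost formal: by construction $\min(\gamma_1,\gamma_2)\curle\gamma_i$ coefficientwise, hence $\gamma_i\curge\min(\gamma_1,\gamma_2)$ for each $i$; and if $\kappa\in\Delta^+$ satisfies $\gamma_i\curge\kappa$ for both $i$, then $\hot_\ap(\kappa)\le\hot_\ap(\gamma_i)$ for every $\ap$ and every $i$, so $\hot_\ap(\kappa)\le\min\{\hot_\ap(\gamma_1),\hot_\ap(\gamma_2)\}$, giving $\min(\gamma_1,\gamma_2)\curge\kappa$. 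The reverse direction of the ``if and only if'' is the easy observation already noted for simple roots: if $\supp(\gamma_1)\cap\supp(\gamma_2)=\varnothing$ then any common lower bound $\kappa$ would have empty support, which is impossible for a root, so no meet can exist.

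For the crucial claim that $\min(\gamma_1,\gamma_2)\in\Delta^+$, I would argue by descent on the height defect $\hot(\gamma_1)+\hot(\gamma_2)-2\hot(\min(\gamma_1,\gamma_2))$. Set $\mu=\min(\gamma_1,\gamma_2)$; if $\gamma_1=\gamma_2$ there is nothing to prove, so assume they differ. The idea is to peel a simple root off one of the $\gamma_i$ while staying inside $\Delta^+$ and not disturbing $\mu$. Concretely, if $\gamma_1\ne\mu$, pick a simple root $\ap$ with $\hot_\ap(\gamma_1)>\hot_\ap(\mu)=\min\{\hot_\ap(\gamma_1),\hot_\ap(\gamma_2)\}$, so that $\hot_\ap(\gamma_1)>\hot_\ap(\gamma_2)$; one wants to find such an $\ap$ for which $\gamma_1-\ap\in\Delta^+$, and then replace $\gamma_1$ by $\gamma_1-\ap$. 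Since $\hot_\ap(\gamma_1-\ap)\ge\hot_\ap(\gamma_2)$ when the original gap was at least one, we have $\min(\gamma_1-\ap,\gamma_2)=\mu$ still, and the height defect drops by one, so induction finishes. The role of the overlapping-support hypothesis is to guarantee $\mu\ne 0$, hence that $\gamma_1$ cannot be decomposed all the way down to nothing before meeting $\gamma_2$; it keeps the descent anchored.

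The step I expect to be the main obstacle is exactly the existence of a \emph{removable} simple root $\ap$ with the two properties $\hot_\ap(\gamma_1)>\hot_\ap(\gamma_2)$ and $\gamma_1-\ap\in\Delta^+$. The difficulty is that the simple roots one can legitimately subtract from $\gamma_1$ (those $\ap$ with $\gamma_1-\ap\in\Delta^+$) need not be among those where $\gamma_1$ strictly dominates $\gamma_2$. This is precisely where the two cited lemmata enter: the first lemma (\cite[Lemma\,3.1]{adv01}) controls what happens when two simple roots $\ap,\beta$ can both be subtracted from a root, forcing $\gamma_1-\ap-\beta\in\Delta^+$ except in the small case $\gamma_1=\ap+\beta$; the second lemma (\cite[Lemma\,3.2]{jlt16}) dually controls adding two simple roots. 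I would use these to reshuffle the subtraction order: starting from any simple root $\ap_0$ removable from $\gamma_1$, if it happens to lie in the ``wrong'' place relative to $\gamma_2$, the lemmata let me trade it for another removable simple root while preserving membership in $\Delta^+$, eventually landing on one at which the coefficient strictly exceeds that of $\gamma_2$. Since $\gamma_1\ne\mu$ guarantees at least one coordinate where $\gamma_1$ strictly dominates, a connectivity-and-support argument on the Dynkin diagram should force the removable root to be slideable into such a coordinate.

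Finally, for part (1), the modularity of $I\langle{\curge}\gamma\rangle$ follows once (2) is in hand: every pair of elements $\eta_1,\eta_2\in I\langle{\curge}\gamma\rangle$ satisfies $\gamma\curge\eta_i$, so $\gamma$ is a common lower bound and hence $\supp(\eta_1)\cap\supp(\eta_2)\supseteq\supp(\gamma)\ne\varnothing$, making $\eta_1\wedge\eta_2=\min(\eta_1,\eta_2)$ available inside the ideal; together with the join from \cite[Theorem\,A.1]{imrn}, $I\langle{\curge}\gamma\rangle$ is a lattice with both operations given by coefficientwise $\min$ and $\max$. Modularity is then immediate from the fact that on the sublattice of integer vectors cut out by the coordinatewise $\min/\max$ operations, the lattice embeds into the distributive (hence modular) lattice $(\BZ^n,\min,\max)$; I would verify that $I\langle{\curge}\gamma\rangle$ is closed under these operations — which is exactly the content of (2) applied within the ideal — and invoke that a sublattice of a distributive lattice is distributive, a fortiori modular. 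The one point to check carefully is that $\max(\eta_1,\eta_2)=\eta_1\vee\eta_2$ for elements of the ideal, i.e.\ that the explicit join formula of \cite[Theorem\,A.1]{imrn} reduces to coefficientwise $\max$ here; this should hold because both roots dominate $\gamma$, so their join stays within the same saturated region where the max is a root.
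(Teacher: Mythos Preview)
Your approach is genuinely different from the paper's, and the two routes are worth contrasting.

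\textbf{The paper's route.} The paper proves (1) first and (2) as a consequence. For (1): since the join always exists in $\Delta^+$ and lands in $I\langle{\curge}\gamma\rangle$, this ideal is a join-semilattice with a least element, hence a lattice by \cite[Prop.\,3.3.1]{stan}. Modularity then follows from the covering characterisation \cite[Prop.\,3.3.2]{stan}, which is exactly what Proposition~\ref{prop:vee-and-wedge} supplies (the degenerate case $\eta_1\vee\eta_2=\ap+\beta$ cannot occur inside $I\langle{\curge}\gamma\rangle$). For (2): modularity gives the rank identity $\hot(\gamma_1\vee\gamma_2)+\hot(\gamma_1\wedge\gamma_2)=\hot(\gamma_1)+\hot(\gamma_2)$; since $\gamma_1\vee\gamma_2=\max(\gamma_1,\gamma_2)$ once the supports meet, and $\gamma_1\wedge\gamma_2\curle\min(\gamma_1,\gamma_2)$ trivially, comparing heights forces $\gamma_1\wedge\gamma_2=\min(\gamma_1,\gamma_2)$. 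The point is that the paper never has to prove directly that $\min(\gamma_1,\gamma_2)$ is a root; this falls out of the rank equation.

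\textbf{Your route and its gap.} You attempt (2) first by a direct descent showing $\min(\gamma_1,\gamma_2)\in\Delta^+$, then deduce (1), in fact distributivity (stronger than modularity), from the embedding into $(\BZ^n,\min,\max)$. The deduction of (1) from (2) is clean and correct. The gap is in the descent: you need, whenever $\gamma_1\ne\min(\gamma_1,\gamma_2)$, a simple root $\ap$ with both $\gamma_1-\ap\in\Delta^+$ and $\hot_\ap(\gamma_1)>\hot_\ap(\gamma_2)$. You propose to ``trade'' an arbitrary removable simple root for one in the right position using the two cited lemmata, but those lemmata concern subtracting or adding \emph{two} simple roots simultaneously from a single root; they do not obviously let you slide a removable simple root along the Dynkin diagram toward a coordinate where $\gamma_1$ dominates $\gamma_2$. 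The ``connectivity-and-support argument'' you allude to is the entire content of the claim and is not supplied. This step may well be completable, but as written it is a genuine gap, and the paper's approach avoids it entirely by reversing the logical order.
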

\begin{proof}
1) By~\cite[Theorem~A.1(i)]{imrn},  the join always exists in $\Delta^+$ and formulae for `$\vee$' show that
$\gamma_1\vee\gamma_2\in I\langle{\curge}\gamma\rangle$ whenever $\gamma_1,\gamma_2\in I\langle{\curge}\gamma\rangle$.
Therefore, $I\langle{\curge}\gamma\rangle$ is a join-semilattice with a unique minimal element. Hence the meet also exists for any $\gamma_1,\gamma_2\in I\langle{\curge}\gamma\rangle$, 
see~\cite[Prop.\,3.3.1]{stan}. That is, $I\langle{\curge}\gamma\rangle$ is a lattice.
Note that, for $\gamma_1,\gamma_2\in I\langle{\curge}\gamma\rangle$, the first possibility in
Proposition~\ref{prop:vee-and-wedge}(i) does not realise.  Therefore, using 
Proposition~\ref{prop:vee-and-wedge} with $I\langle{\curge}\gamma\rangle$ in place of $\Delta^+$ and 
\cite[Prop.\,3.3.2]{stan}, we conclude that $I\langle{\curge}\gamma\rangle$ is a modular lattice.

Yet, this does not provide a formula for the meet and leaves a theoretical possibility that $\gamma_1\wedge\gamma_2$
depends on $\gamma$.

2) If $\supp(\gamma_1)\cap\supp(\gamma_2)=\varnothing$, then there are no roots $\nu$ such that
$\gamma_1\curge \nu$ and $\gamma_2\curge\nu$. Conversely, if
$\supp(\gamma_1)\cap\supp(\gamma_2)\ne\varnothing$, then $\gamma_1,\gamma_2\in 
I\langle{\curge}\gamma\rangle$ for some $\gamma$. Using again~\cite[Prop.\,3.3.2]{stan}, the modularity of the lattice $I\langle{\curge}\gamma\rangle$ implies that
$\hot(\gamma_1\vee\gamma_2)+\hot(\gamma_1\wedge\gamma_2)=\hot(\gamma_1)+
\hot(\gamma_2)$, where $\gamma_1\wedge\gamma_2$ is taken inside $I\langle{\curge}\gamma\rangle$.
It is clear that $\gamma_1\wedge\gamma_2\curle\min(\gamma_1,\gamma_2)$. Moreover, in this situation, 
the formulae of~\cite[Theorem~A.1(i)]{imrn} imply that 
$\gamma_1\vee\gamma_2=\max(\gamma_1,\gamma_2)$. 
Therefore, $\hot(\min(\gamma_1,\gamma_2))=\hot(\gamma_1\wedge\gamma_2)$ and thereby
$\min(\gamma_1,\gamma_2)=\gamma_1\wedge\gamma_2$.
\end{proof}

\begin{rmk}   \label{rem:modular}
A special class of modular lattices inside $\Delta^+$ occurs in connection with $\BZ$-gradings of $\g$. 
For $\ap\in\Pi$, set $\Delta_\ap(i)=\{\gamma\in \Delta \mid \hot_\ap(\gamma)=i\}$. It is known that
$\Delta_\ap(i)$ has a unique minimal and a unique maximal element, see Section~\ref{sect:Z-gr-&-nc}. It is also clear that $\gamma_1\wedge\gamma_2$ and
$\gamma_1\vee\gamma_2\in\Delta_\ap(i)$ for all $\gamma_1, \gamma_2\in\Delta_\ap(i)$. Hence 
$\Delta_\ap(i)$ is a {\bf modular} lattice. (It was already noticed in \cite[Appendix]{imrn} that
$\Delta_\ap(i)$ is a lattice.)
\end{rmk}

\begin{rmk}   \label{rem:theta-fundam}
In what follows, we have to distinguish the $\GR{A}{n}$-case from the other types. One the reasons is that
$\theta$ is not a multiple of a fundamental weight only for $\GR{A}{n}$. In all other types, there is a unique
$\ap_\theta\in\Pi$ such that $(\theta,\ap_\theta)\ne 0$. For the $\BZ$-grading associated with 
$\ap_\theta$, one then has $\Delta_{\ap_\theta}(1)=\gH\setminus \{\theta\}$ and 
$\Delta_{\ap_\theta}(2)= \{\theta\}$. That is, $\gH\setminus \{\theta\}$ (or just $\gH$) has a unique minimal 
element, which is $\ap_\theta$, if and only if $\Delta$ is not of type $\GR{A}{n}$. This provides the following consequence of
Theorem~\ref{thm:inf=min}: \\ \centerline{
{\it If $\Delta$ is not of type $\GR{A}{n}$, then for all
$\eta_1,\eta_2\in \gH\setminus \{\theta\}$, the meet $\eta_1\wedge\eta_2$ exists and lies in
$\gH\setminus \{\theta\}$.}}
\noindent
This is going to be used several times in Section~\ref{sect:subsets-of-Pi_l}.
\end{rmk}

\section{$\BZ$-gradings and non-commutative roots}   \label{sect:Z-gr-&-nc}

\noindent
If $\gamma\in \Delta^+_{\sf com}$, then $\gamma$ belongs to a maximal abelian ideal.
Since $I(\ap)_{\sf max}$, $\ap\in\Pi_l$, are all the maximal abelian ideals in $\Delta^+$, we have
\[
       \Delta^+_{\sf com}=\bigcup_{\ap\in\Pi_l}  I(\ap)_{\sf max} .    
\]
Set $\Delta^+_{\sf nc}=\Delta^+\setminus \Delta^+_{\sf com}$. In this section, we obtain an {\sl a priori} description of $\Delta^+_{\sf nc}$. 
Let us introduce special elements of the root lattice
\beq   \label{eq:thety}
 \thi =\sum_{\ap\in\Pi}  \lfloor\hot_\ap(\theta)/2\rfloor \ap \ \text{ and } \ \tthe=\sum_{\ap\in\Pi}  \lceil\hot_\ap(\theta)/2\rceil \ap .
\eeq
Hence $\thi+\tthe=\theta$. Note that $\lfloor\theta/2\rfloor =0$ if and only if $\theta=\sum_{\ap\in\Pi}\ap$, 
i.e., $\Delta$ is of type $\GR{A}{n}$.

\begin{lm}   \label{lm:nc-&-theta/2}
Suppose that $\Delta$ is not of type $\GR{A}{n}$, so that $\lfloor\theta/2\rfloor \ne 0$. 

{\sf (1)} \ If $\gamma\in \Delta^+_{\sf nc}$, then $\hot_\ap\gamma\le \lfloor\hot_\ap(\theta)/2\rfloor $ for all $\ap\in\Pi$,
i.e., $\gamma\curle \lfloor\theta/2\rfloor $.

{\sf (2)} \ If\/ $\gamma_1,\gamma_2\curle \lfloor\theta/2\rfloor $, then $\gamma_1\vee\gamma_2\curle \lfloor\theta/2\rfloor $.
\end{lm}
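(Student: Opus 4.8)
The plan is to reduce both parts to coefficientwise inequalities on the functions $\hot_\ap$, since $\gamma\curle\thi$ means precisely $\hot_\ap(\gamma)\le \hot_\ap(\thi)=\lfloor\hot_\ap(\theta)/2\rfloor$ for every $\ap\in\Pi$. For {\sf (1)} the decisive step is to unwind the definition of a non-commutative root. If $\gamma\in\Delta^+_{\sf nc}$, then $I\langle{\curge}\gamma\rangle$ is not abelian, so by the first bulleted fact of Section~\ref{sect:1} there exist $\eta,\eta'\in I\langle{\curge}\gamma\rangle$ with $\eta+\eta'=\theta$. Both satisfy $\eta,\eta'\curge\gamma$, so comparing the coefficient of each $\ap\in\Pi$ gives $2\hot_\ap(\gamma)\le \hot_\ap(\eta)+\hot_\ap(\eta')=\hot_\ap(\theta)$, whence $\hot_\ap(\gamma)\le\lfloor\hot_\ap(\theta)/2\rfloor=\hot_\ap(\thi)$. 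Since this holds for all $\ap$, we obtain $\gamma\curle\thi$. This disposes of {\sf (1)} with no case analysis.

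For {\sf (2)} I would start from the explicit formula for the join in \cite[Theorem~A.1]{imrn} and split according to whether $\supp(\gamma_1)\cap\supp(\gamma_2)=\varnothing$. If the supports meet, then $\gamma_1$ and $\gamma_2$ possess a common lower bound and hence lie in a common upper ideal $I\langle{\curge}\gamma\rangle$; exactly as in the proof of Theorem~\ref{thm:inf=min}{\sf (2)}, the formulae of \cite[Theorem~A.1(i)]{imrn} then give $\gamma_1\vee\gamma_2=\max(\gamma_1,\gamma_2)$. Since $\hot_\ap(\max(\gamma_1,\gamma_2))=\max\{\hot_\ap(\gamma_1),\hot_\ap(\gamma_2)\}\le\hot_\ap(\thi)$ for every $\ap$, we conclude $\gamma_1\vee\gamma_2\curle\thi$ at once.

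The remaining, and principal, difficulty is the disjoint-support case: there $\max(\gamma_1,\gamma_2)$ has disconnected support, so it is not a root, and the join strictly dominates it by filling in a connecting path along the Dynkin diagram. To control this I would use the $\BZ$-grading attached to $\ap_\theta$ (Remark~\ref{rem:theta-fundam}): since $\Delta\ne\GR{A}{n}$, one has $\Delta_{\ap_\theta}(2)=\{\theta\}$, so $\hot_{\ap_\theta}(\theta)=2$, $\hot_{\ap_\theta}(\thi)=1$, and $\theta$ is the unique root with $\hot_{\ap_\theta}=2$. Both hypotheses $\gamma_i\curle\thi$ force $\hot_{\ap_\theta}(\gamma_i)\le1$, which already rules out $\gamma_1\vee\gamma_2=\theta$; the task is then to verify, from the explicit join formula, that the coefficients added along the connecting path never push any $\hot_\ap$ above $\hot_\ap(\thi)$. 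I expect this bookkeeping to be the crux. Once it is in place, combining {\sf (1)} and {\sf (2)} shows that $\{\gamma\in\Delta^+\mid\gamma\curle\thi\}$ is a finite join-subsemilattice of $(\Delta^+,\curge)$ containing $\Delta^+_{\sf nc}$, which is the intended payoff toward identifying its unique maximal element with $\thi$.
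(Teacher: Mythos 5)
Your part {\sf (1)} is correct and is precisely the argument the paper compresses into the word ``obvious'': non-commutativity of $I\langle{\curge}\gamma\rangle$ produces $\eta,\eta'\curge\gamma$ with $\eta+\eta'=\theta$, whence $2\hot_\ap(\gamma)\le \hot_\ap(\theta)$ and so $\hot_\ap(\gamma)\le\lfloor\hot_\ap(\theta)/2\rfloor$ for every $\ap$. The first half of your part {\sf (2)} is also fine, although your case split is slightly misaligned with the join formula: what \cite[Theorem~A.1]{imrn} gives is that $\gamma_1\vee\gamma_2=\max(\gamma_1,\gamma_2)$ whenever $\supp(\gamma_1)\cup\supp(\gamma_2)$ is \emph{connected}, which covers not only meeting supports but also disjoint supports that are adjacent in the Dynkin diagram; the genuinely hard case is a disconnected union of supports.

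In that case your proof has a real gap, and you say so yourself (``I expect this bookkeeping to be the crux''). The tool you propose cannot close it: knowing that $\theta$ is the unique root with $\hot_{\ap_\theta}=2$ only controls the coefficient of the join at the single node $\ap_\theta$, whereas the inequality $\gamma_1\vee\gamma_2\curle\thi$ could only fail at the simple roots of the connecting chain, about which this says nothing. The missing observation, which is the actual content of the paper's proof, is structural: writing $\gamma_1\vee\gamma_2=\gamma_1+(\ap_{i_1}+\dots+\ap_{i_s})+\gamma_2$, each chain node $\ap_{i_j}$ has a neighbour on either side (another chain node or a node of one of the supports), hence is not an extreme node of the Dynkin diagram; and outside type $\GR{A}{n}$ every non-extreme node satisfies $\hot_\ap(\theta)\ge 2$, i.e.\ $\hot_\ap(\thi)\ge 1$. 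Since the chain nodes lie outside both supports, the join has coefficient exactly $1\le \hot_\ap(\thi)$ there, while at all other nodes its coefficient equals that of $\gamma_1$ or of $\gamma_2$, which is bounded by hypothesis. Note that this inspection of the coefficients of $\theta$ is exactly where the hypothesis $\Delta\ne\GR{A}{n}$ must enter case {\sf (2)}: in type $\GR{A}{n}$ the interior coefficients are $1$ and the mechanism breaks down, so no argument that uses $\Delta\ne\GR{A}{n}$ only through the existence of $\ap_\theta$ can suffice.
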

\begin{proof}
{\sf (1)} \ Obvious.
\\ \indent
{\sf (2)} \  
By~\cite[Theorem~A.1]{imrn}, if $\supp(\gamma_1)\cup\supp(\gamma_2)$ is connected, then $\gamma_1\vee\gamma_2=
\max(\gamma_1,\gamma_2)$ and the assertion is clear. Otherwise, 
$\gamma_1\vee\gamma_2=\gamma_1+(\text{connecting root})+\gamma_2$. Recall that if the union of supports is not connected, then there is a (unique) chain of simple roots that connects them. If this chain
consists of $\ap_{i_1},\dots,\ap_{i_s}$, then the "connecting root" is $\ap_{i_1}+\dots +\ap_{i_s}$.
Here we only need the condition that $\hot_{\ap}(\theta)\ge 2$ for any $\ap$ in the connecting chain.
Indeed, the roots in this chain are not extreme in the Dynkin diagram, and outside type $\GR{A}{n}$ the coefficients of non-extreme simple roots are always $\ge 2$.
\end{proof}

\begin{rema}
For $\GR{A}{n}$, $\lfloor\theta/2\rfloor = 0$ and hence $\Delta^+_{\sf nc}= \varnothing$.
\end{rema}
Set $\gA=\{\gamma\in \Delta^+\mid \gamma\curle \lfloor\theta/2\rfloor \}$. Then $\gA\ne \varnothing$ if and
only if $\Delta$ is not of type $\GR{A}{n}$. It follows from 
Lemma~\ref{lm:nc-&-theta/2} that 

\textbullet \quad $\Delta^+_{\sf nc}\subset \gA$;

\textbullet \quad  $\gA$ has a unique maximal element.

\noindent
Our goal is to prove that $\Delta^+_{\sf nc}= \gA$ and $\max(\gA)=\{\lfloor\theta/2\rfloor \}$. The latter essentially boils down to the assertion that $\lfloor\theta/2\rfloor $ is a root.

For an arbitrary $\ap\in\Pi$, consider the $\BZ$-grading $\g=\bigoplus_{i\in\BZ}\g_\ap(i)$ corresponding to
$\ap$. That is, the set of roots of $\g_\ap(i)$ is $\Delta_\ap(i)$, see Remark~\ref{rem:modular}. In particular,
$\ap\in \Delta_\ap(1)$ and $\Pi\setminus \{\ap\}\subset \Delta_\ap(0)$. Here $\el:=\g_\ap(0)$ is reductive 
and contains the Cartan subalgebra $\te$. By an old result of Kostant (see \cite{ko10} and Joseph's 
exposition in~\cite[2.1]{jos98}), each $\g_\ap(i)$, $i\ne 0$, is a simple $\el$-module. Therefore, 
$\Delta_\ap(i)$ has a unique minimal and a unique maximal element. 
The following is a particular case of Theorem~2.3 in~\cite{ko10}.

\begin{prop}    \label{prop:non-zero}
If\/ $i+j\le \hot_\ap(\theta)$, then $0\ne [\g_\ap(i),\g_\ap(j)]=\g_\ap(i+j)$.
\end{prop}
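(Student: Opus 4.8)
\textbf{Proof plan for Proposition~\ref{prop:non-zero}.}

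The plan is to prove the statement by a downward induction using the structure of the $\BZ$-grading $\g=\bigoplus_i \g_\ap(i)$ together with the fact, recalled just above the statement, that each $\g_\ap(i)$ with $i\ne 0$ is a simple $\el$-module (where $\el=\g_\ap(0)$). The claim has two parts: that the bracket $[\g_\ap(i),\g_\ap(j)]$ lands in $\g_\ap(i+j)$ (immediate from the grading) and that it is nonzero and \emph{equal} to all of $\g_\ap(i+j)$ whenever $i+j\le\hot_\ap(\theta)$. The substance is the equality and nonvanishing, so I would focus there.

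First I would record the easy reductions. By the grading axiom $[\g_\ap(i),\g_\ap(j)]\subseteq \g_\ap(i+j)$, so it suffices to show the reverse inclusion under the hypothesis $i+j\le\hot_\ap(\theta)$; note $\g_\ap(i+j)\ne 0$ precisely because $i+j$ is a height that occurs, the top one being $\hot_\ap(\theta)=\hot_\ap(\theta)$ realised by $\theta\in\Delta_\ap(\hot_\ap(\theta))$. Since $\g_\ap(i+j)$ is a simple $\el$-module, the image $[\g_\ap(i),\g_\ap(j)]$, being an $\el$-submodule of $\g_\ap(i+j)$ (as $\el$ normalises each graded piece and acts by derivations), is either $0$ or everything; hence it is enough to prove it is nonzero. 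I would first treat the base case where one index is extremal, using that $\g_\ap(1)$ contains the simple root vectors and that bracketing with $\el$ and with $\g_\ap(1)$ moves between adjacent graded layers, then climb.

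The main step is the inductive/nonvanishing argument. I would argue as follows: pick the highest weight (longest) root $\mu\in\Delta_\ap(i+j)$, which exists and is unique by Kostant's simplicity result. Because $i+j\le\hot_\ap(\theta)$, one can split $\mu$ as a sum of a root in $\Delta_\ap(i)$ and a root in $\Delta_\ap(j)$ by peeling off simple roots one layer at a time, invoking Lemma~2.2 (the $\gamma+\ap,\gamma+\beta\in\Delta^+\Rightarrow\gamma+\ap+\beta\in\Delta^+$ type statement) to guarantee that the partial sums remain roots and stay within the correct $\ap$-height. For such a decomposition $\mu=\mu'+\mu''$ with $\mu'\in\Delta_\ap(i)$, $\mu''\in\Delta_\ap(j)$, the corresponding root vectors satisfy $[e_{\mu'},e_{\mu''}]=N_{\mu',\mu''}e_\mu$ with structure constant $N_{\mu',\mu''}\ne 0$ (since $\mu'+\mu''$ is a root). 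This exhibits a nonzero element of $\g_\ap(i+j)$ in the image, forcing $[\g_\ap(i),\g_\ap(j)]=\g_\ap(i+j)$ by simplicity.

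The hard part will be the combinatorial guarantee that a suitable additive splitting $\mu=\mu'+\mu''$ with the correct $\ap$-heights $i$ and $j$ actually exists whenever $i+j\le\hot_\ap(\theta)$; this is exactly where the hypothesis is used, and it is the place where one must control the $\ap$-coefficient as simple roots are removed from $\mu$. I would handle this by repeatedly applying Lemma~2.1 to descend from $\mu$ through a chain of roots $\mu\succ\mu-\beta_1\succ\cdots$ in which the $\ap$-height drops by exactly one at each step that removes $\ap$, arranging the descent so as to reach $\ap$-height $i$ at a genuine root $\mu'$ with $\mu-\mu'=\mu''$ also a root of $\ap$-height $j$. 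The bound $i+j\le\hot_\ap(\theta)$ ensures $\mu$ itself has $\ap$-height $i+j$ large enough to admit such a split, while the simplicity of each layer keeps the intermediate terms inside genuine root spaces.
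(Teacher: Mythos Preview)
The paper does not actually prove this proposition: it is quoted as a particular case of Theorem~2.3 in Kostant's paper~\cite{ko10}, and the only remark added is exactly your reduction, namely that once $[\g_\ap(i),\g_\ap(j)]\ne 0$, the equality with $\g_\ap(i+j)$ follows from the simplicity of $\g_\ap(i+j)$ as an $\el$-module. So your first two paragraphs (the grading inclusion, and the $\el$-submodule/simplicity argument reducing everything to nonvanishing) are correct and agree with the paper's treatment.

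Where your plan diverges is that you attempt to supply a direct combinatorial proof of nonvanishing, and here there is a genuine gap. Your strategy is to take the highest root $\mu\in\Delta_\ap(i+j)$ and descend from $\mu$ by subtracting simple roots, via Lemma~2.1, until you reach a root $\mu'$ of $\ap$-height $i$, and then declare $\mu''=\mu-\mu'$. The problem is that nothing in this descent forces $\mu''$ to be a root: $\mu-\mu'$ is merely the sum of the simple roots you removed along the chain, and a sum of simple roots peeled off one at a time from $\mu$ need not itself be a root (it is only guaranteed to lie in the positive root lattice). Lemmas~2.1 and~2.2 control what happens when you add or remove a \emph{single} simple root from a given root; they say nothing about whether the accumulated difference $\mu-\mu'$ is a root. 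Likewise, ``the simplicity of each layer keeps the intermediate terms inside genuine root spaces'' applies to the roots along the chain (the $\mu-\beta_1-\cdots-\beta_k$), not to the complementary sums $\beta_1+\cdots+\beta_k$.

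So as written, the hard part you flagged remains genuinely open in your plan. If you want a self-contained argument, one route that does work is to first establish the special case $[\g_\ap(1),\g_\ap(k)]=\g_\ap(k+1)$ for $1\le k<\hot_\ap(\theta)$ (here the splitting of a root in $\Delta_\ap(k+1)$ as $\beta+\nu$ with $\beta\in\Pi$, $\nu\in\Delta_\ap(k)$ is immediate, since some simple root can always be subtracted and one can arrange it to be $\ap$), and then bootstrap to general $i,j$ using the Jacobi identity and the simplicity of each layer; alternatively, simply cite Kostant as the paper does.
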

Once one has proved that $[\g_\ap(i),\g_\ap(j)]\ne 0$, the equality $[\g_\ap(i),\g_\ap(j)]=\g_\ap(i+j)$ stems from the fact that 
$\g_\ap(i+j)$ is a simple $\el$-module. We derive from this result two corollaries.

\begin{cl}    \label{cl:11}
For any $\mu\in\Delta_\ap(i)$, there is $\nu\in\Delta_\ap(j)$ such that $\mu+\nu\in\Delta_\ap(i+j)$.
\end{cl}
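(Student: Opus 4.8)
The plan is to deduce the statement from the simplicity of the $\el$-modules $\g_\ap(k)$ (for $k\ne 0$) together with Proposition~\ref{prop:non-zero}, under the standing assumption that $i\ne 0$ and $i+j\le\hot_\ap(\theta)$. Fix a nonzero root vector $e_\mu\in\g_\mu\subset\g_\ap(i)$ and set
\[
M=\{x\in\g_\ap(i)\mid [x,\g_\ap(j)]=0\}.
\]
The corollary reduces to showing $e_\mu\notin M$: if $[e_\mu,\g_\ap(j)]\ne 0$, then choosing a basis of $\g_\ap(j)$ made of root vectors $e_\nu$ with $\nu\in\Delta_\ap(j)$, there is some $\nu$ with $[e_\mu,e_\nu]\ne 0$; hence $\mu+\nu\in\Delta$ with $\hot_\ap(\mu+\nu)=i+j$, that is, $\mu+\nu\in\Delta_\ap(i+j)$, as required.

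First I would verify that $M$ is an $\el$-submodule of $\g_\ap(i)$. Since $\el=\g_\ap(0)$ preserves the grading, $[t,y]\in\g_\ap(j)$ for every $t\in\el$ and $y\in\g_\ap(j)$; hence for $x\in M$ the Jacobi identity gives
\[
[[t,x],y]=[t,[x,y]]-[x,[t,y]]=0\quad\text{for all }y\in\g_\ap(j),
\]
because $[x,y]=0$ and $[x,[t,y]]=0$. Thus $[t,x]\in M$, so $M$ is indeed an $\el$-submodule. As $\g_\ap(i)$ is a simple $\el$-module (here the assumption $i\ne 0$ is used), $M$ is either $0$ or all of $\g_\ap(i)$.

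It remains to exclude $M=\g_\ap(i)$. By Proposition~\ref{prop:non-zero}, the hypothesis $i+j\le\hot_\ap(\theta)$ gives $[\g_\ap(i),\g_\ap(j)]=\g_\ap(i+j)\ne 0$, so $M$ cannot be all of $\g_\ap(i)$. By simplicity $M=0$, whence $e_\mu\notin M$ and the reduction of the first paragraph finishes the proof. I do not expect a genuine obstacle here: the submodule check is routine and the passage from a nonzero bracket of root vectors to a root sum is standard, so all the real content is carried by Proposition~\ref{prop:non-zero}.
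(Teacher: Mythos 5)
Your proof is correct and takes essentially the same route as the paper's: both reduce the claim to showing that $e_\mu$ cannot annihilate $\g_\ap(j)$, and both derive this from the simplicity of $\g_\ap(i)$ as an $\el$-module together with Proposition~\ref{prop:non-zero}. The only difference is cosmetic — the paper upgrades $[e_\mu,\g_\ap(j)]=0$ to $[\g_\ap(i),\g_\ap(j)]=0$ by noting that the linear span of the $L$-orbit of $e_\mu$ is the whole simple module, whereas you observe via the Jacobi identity that the annihilator $M$ is an $\el$-submodule, hence $0$ or everything; this is a purely Lie-algebra-level version of the same argument.
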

\begin{proof}
Let $e_{\mu}\in \g_\ap(i) $ be a root vector for $\mu$. Assume that the property in question does not hold.
Then  $[e_\mu, \g_\ap(j)]=0$. Hence $[L{\cdot}e_{\mu}, \g_\ap(j)]=0$, where $L\subset G$ is  
the connected reductive group with Lie algebra $\el$. Since the linear span of an $L$-orbit in a 
simple $L$-module is the whole space, this implies that
$[\g_\ap(i),\g_\ap(j)]=0$, which contradicts the proposition.
\end{proof}

Set  $d_{\ap}=\lfloor\hot_\ap(\theta)/2\rfloor $, and let $\mu_{d_{\ap}}$ be the lowest weight in
$\Delta_\ap(d_{\ap})$.

\begin{cl}       \label{cl:2}
$\mu_{d_{\ap}}\in \Delta^+_{\sf nc}$.
\end{cl}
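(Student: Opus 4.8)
The plan is to show directly that the upper ideal $I\langle{\curge}\mu_{d_{\ap}}\rangle$ is not abelian, since by definition this is exactly what it means for $\mu_{d_{\ap}}$ to lie in $\Delta^+_{\sf nc}$. Concretely, I would exhibit two elements of this ideal whose sum is a positive root, and the tool for this is Corollary~\ref{cl:11} together with the fact that $\mu_{d_{\ap}}$ is the lowest weight of a simple $\el$-module.

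First I would note the elementary inequality $2d_{\ap}=2\lfloor\hot_\ap(\theta)/2\rfloor\le \hot_\ap(\theta)$, which holds whether $\hot_\ap(\theta)$ is even or odd. Hence Corollary~\ref{cl:11}, applied with $i=j=d_{\ap}$ and $\mu=\mu_{d_{\ap}}\in\Delta_\ap(d_{\ap})$, produces a root $\nu\in\Delta_\ap(d_{\ap})$ such that $\mu_{d_{\ap}}+\nu\in\Delta_\ap(2d_{\ap})\subset\Delta^+$. The decisive point is then that $\nu\curge\mu_{d_{\ap}}$: indeed, $\mu_{d_{\ap}}$ is by definition the lowest weight of the simple $\el$-module $\g_\ap(d_{\ap})$, so every weight $\nu$ of this module is $\mu_{d_{\ap}}$ plus a non-negative integer combination of the simple roots in $\Pi\setminus\{\ap\}$, whence $\nu\curge\mu_{d_{\ap}}$ in $(\Delta^+,\curge)$. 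Consequently both $\mu_{d_{\ap}}$ and $\nu$ belong to $I\langle{\curge}\mu_{d_{\ap}}\rangle$, while their sum $\mu_{d_{\ap}}+\nu$ is a positive root. Therefore $I\langle{\curge}\mu_{d_{\ap}}\rangle\notin\Ab$, i.e. $\mu_{d_{\ap}}\in\Delta^+_{\sf nc}$.

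I expect the only genuinely delicate step to be the passage from ``lowest weight of $\g_\ap(d_{\ap})$'' to ``minimal element of $(\Delta_\ap(d_{\ap}),\curge)$''. Here one uses that the difference of any two weights within the fixed $\ap$-level $\Delta_\ap(d_{\ap})$ involves only the simple roots of $\el=\g_\ap(0)$, so that the weight ordering of the $\el$-module coincides with the restriction of the root-poset order `$\curge$'; this is precisely the fact recorded earlier that $\Delta_\ap(d_{\ap})$ has a unique minimal element. Everything else is a formal consequence of Corollary~\ref{cl:11} and the definition of a commutative root, and no case analysis on the type of $\Delta$ is needed beyond the standing assumption $d_{\ap}\ge 1$ that makes $\mu_{d_{\ap}}$ a well-defined (positive) root.
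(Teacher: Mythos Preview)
Your argument is correct and follows essentially the same route as the paper: apply Corollary~\ref{cl:11} with $i=j=d_\ap$ to obtain $\nu\in\Delta_\ap(d_\ap)$ with $\mu_{d_\ap}+\nu\in\Delta^+$, then use that $\mu_{d_\ap}$ is the lowest weight of $\g_\ap(d_\ap)$ to conclude $\nu\curge\mu_{d_\ap}$, so the upper ideal generated by $\mu_{d_\ap}$ is not abelian. Your write-up is in fact more careful than the paper's, which leaves the inequality $2d_\ap\le\hot_\ap(\theta)$ and the passage from ``lowest weight'' to ``minimal in $(\Delta_\ap(d_\ap),\curge)$'' implicit.
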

\begin{proof}
By Corollary~\ref{cl:11}, there is $\lb\in \Delta_\ap(d_{\ap})$ such that $\mu_{d_{\ap}}+\lb$ is a root
in $\Delta_\ap(2d_\ap)$.
Since $\mu_{d_{\ap}}\curle\gamma$, the upper ideal in $\Delta^+$ generated by $\mu_{d_{\ap}}$ is 
not abelian.
\end{proof}

This allows us to obtain the promised characterisation of $\Delta^+_{\sf nc}$.

\begin{thm}     \label{thm:delta_nc}
If $\lfloor\theta/2\rfloor \ne 0$, i.e., $\Delta$ is not of type $\GR{A}{n}$, then
$\lfloor\theta/2\rfloor $ is the unique maximal element of $\Delta^+_{\sf nc}$. Furthermore, 
$\lfloor\theta/2\rfloor \in\gH $.
\end{thm}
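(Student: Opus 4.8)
The plan is to show two things: first that $\lfloor\theta/2\rfloor$ is itself a root (and hence lies in $\Delta^+_{\sf nc}$), and second that every element of $\Delta^+_{\sf nc}$ is $\curle \lfloor\theta/2\rfloor$, so that $\lfloor\theta/2\rfloor$ dominates all non-commutative roots and is therefore the unique maximal element. The second half is already in hand: Lemma~\ref{lm:nc-&-theta/2}(1) gives $\Delta^+_{\sf nc}\subset \gA$, and $\gA$ has a unique maximal element. So the whole theorem reduces to proving that $\lfloor\theta/2\rfloor$ is a root lying in $\Delta^+_{\sf nc}$, and that this root is in fact the top of $\gA$.

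Here is where I would use the $\BZ$-grading machinery. Fix $\ap=\ap_\theta$, the unique simple root with $(\theta,\ap)\ne 0$, which exists precisely because $\Delta$ is not of type $\GR{A}{n}$ (Remark~\ref{rem:theta-fundam}). For this choice $\hot_{\ap_\theta}(\theta)=2$, so $d_{\ap_\theta}=1$ and $\Delta_{\ap_\theta}(1)=\gH\setminus\{\theta\}$, while $\Delta_{\ap_\theta}(2)=\{\theta\}$. More usefully, I would instead grade by a simple root for which the coefficient $\hot_\ap(\theta)$ is even and maximal, or simply argue directly: the element $\mu_{d_\ap}$, the lowest weight of $\Delta_\ap(d_\ap)$, is a genuine root by construction, and Corollary~\ref{cl:2} already shows $\mu_{d_\ap}\in\Delta^+_{\sf nc}$. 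The task is then to identify $\mu_{d_\ap}$ — or some element produced this way — with $\lfloor\theta/2\rfloor$. The cleanest route is to observe that $\lfloor\theta/2\rfloor$ is the unique maximal element of $\gA$: since $\mu_{d_\ap}\in\Delta^+_{\sf nc}\subset\gA$ is a root and $\gA$ has a unique maximum, it suffices to exhibit a root in $\gA$ whose $\ap$-coordinates are exactly $\lfloor\hot_\ap(\theta)/2\rfloor$ for every $\ap$ simultaneously. I would build this root by taking the join of the various $\mu_{d_\ap}$ over $\ap\in\Pi$, invoking Lemma~\ref{lm:nc-&-theta/2}(2) (closure of $\gA$ under $\vee$) to keep the result inside $\gA$, and then checking that the resulting height forces equality of every coordinate with $\lfloor\hot_\ap(\theta)/2\rfloor$.

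For the final clause $\lfloor\theta/2\rfloor\in\gH$, I would verify $(\lfloor\theta/2\rfloor,\theta)>0$. Writing $\theta=\thi+\tthe$ with $\thi=\lfloor\theta/2\rfloor$, and using that $\theta$ is the highest root, the pairing $(\thi,\theta)$ is a sum of nonnegative contributions $\lfloor\hot_\ap(\theta)/2\rfloor\,(\ap,\theta)$; since $(\ap,\theta)\ge 0$ for all $\ap\in\Pi$ and is strictly positive for $\ap=\ap_\theta$, it is enough that $\lfloor\hot_{\ap_\theta}(\theta)/2\rfloor\ge 1$. But $\hot_{\ap_\theta}(\theta)=2$ outside type $\GR{A}{n}$, giving $\lfloor 2/2\rfloor=1$, and the claim follows.

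The main obstacle I anticipate is the identification step: knowing that $\gA$ has a unique maximal element and that it is a root does not by itself pin down its coordinates as $\lfloor\hot_\ap(\theta)/2\rfloor$ for \emph{all} $\ap$ at once. Corollary~\ref{cl:2} only produces, for each fixed $\ap$, a non-commutative root $\mu_{d_\ap}$ with the correct $\ap$-coordinate; one must argue that assembling these (via repeated joins, staying in $\gA$ by Lemma~\ref{lm:nc-&-theta/2}(2)) and using the height equality from modularity forces the maximum of $\gA$ to realise every coordinate $\lfloor\hot_\ap(\theta)/2\rfloor$ simultaneously, and that this maximal element is genuinely a root rather than merely an element of the root lattice dominated by $\lfloor\theta/2\rfloor$. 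A careful case-free argument here is the crux; it is precisely the point where the hypothesis that $\theta$ is a multiple of a fundamental weight (so that the relevant coefficients of interior simple roots are all $\ge 2$) does the real work.
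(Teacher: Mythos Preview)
Your proposal is correct and is essentially the paper's own argument: take $\mu_{d_\ap}\in\Delta^+_{\sf nc}$ from Corollary~\ref{cl:2} for each $\ap\in\Pi$, note that $\bigvee_{\ap}\mu_{d_\ap}$ stays $\curle\lfloor\theta/2\rfloor$ by Lemma~\ref{lm:nc-&-theta/2}(2) while also being $\curge\lfloor\theta/2\rfloor$ (since its $\ap$-coordinate is at least $d_\ap$ for every $\ap$), whence equality and $\lfloor\theta/2\rfloor$ is a root. Your worry in the last paragraph is unfounded---joins in $(\Delta^+,\curge)$ are roots by definition, so the squeeze $\curle$ and $\curge$ settles everything with no appeal to modularity or height; and your pairing argument for $\lfloor\theta/2\rfloor\in\gH$ is equivalent to the paper's observation that $\hot_{\ap_\theta}(\lfloor\theta/2\rfloor)=\lfloor 2/2\rfloor=1$.
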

\begin{proof}
It was noticed above that $\Delta^+_{\sf nc}\subset \gA$, $\gA$ has a unique maximal element, say 
$\hat\nu$, and $\hat\nu\curle \lfloor\theta/2\rfloor $. By Corollary~\ref{cl:2}, for any $\ap\in\Pi$, there is $\mu_\ap\in \Delta^+_{\sf nc}$ such that 
$\hot_\ap(\mu_\ap)=d_{\ap}$. Therefore 
$\underset{\ap\in\Pi}{\bigvee}\mu_\ap\curge \lfloor\theta/2\rfloor $. On the other hand, 
$\mu_\ap\curle \hat\nu$ for each $\ap$ and hence $\underset{\ap\in\Pi}{\bigvee}\mu_\ap \curle \hat\nu\curle \lfloor\theta/2\rfloor $.
Thus, $\lfloor\theta/2\rfloor =\hat\nu$ is a root.
If $\ap_\theta$ is the unique simple root such that $(\theta,\ap_\theta)\ne 0$, then $\hot_{\ap_\theta}(\theta)=2$. Therefore $\lfloor\theta/2\rfloor\in \gH$ whenever $\Delta$ is not $\GR{A}{n}$.
\end{proof}

The fact that $\lfloor\theta/2\rfloor $  is the unique maximal non-commutative root has been observed in 
 \cite[Sect.\,4]{jac06} via a case-by-case analysis.

\begin{ex}
If $\Delta$ is of type $\GR{E}{8}$, then 
$\theta=\ \text{\begin{E8}{2}{3}{4}{5}{6}{4}{2}{3}\end{E8}}$ \ and $\lfloor\theta/2\rfloor =\ \text{\begin{E8}{1}{1}{2}{2}{3}{2}{1}{1}\end{E8}}$ .
\end{ex}

\begin{rmk} In the proof of Corollary~\ref{cl:2} and then Theorem~\ref{thm:delta_nc}, we only need the property, which follows from Proposition~\ref{prop:non-zero}, that $[\g_\ap(d_{\ap}),\g_\ap(d_{\ap})]=\g_\ap(2d_{\ap})$.
\\ \indent 
For $\ap\in\Pi$ with $\hot_\ap(\theta)=2$ or $3$, this means that $[\g_\ap(1),\g_\ap(1)]=\g_\ap(2)$, which 
is obvious. This covers all classical simple Lie algebras, $\GR{E}{6}$, and $\GR{G}{2}$. For 
$\GR{E}{7}$, $\GR{E}{8}$, and $\GR{F}{4}$, there are $\ap\in\Pi$ such that
$\hot_\ap(\theta)\in \{4,5,6\}$. Then the required relation is $[\g_\ap(2),\g_\ap(2)]=\g_\ap(4)$
or $[\g_\ap(3),\g_\ap(3)]=\g_\ap(6)$. This can easily be verified case-by-case.  However, our intention is to provide
a case-free treatment of this  property.
\end{rmk}

Another consequence of Kostant's theory~\cite{ko10} is that one obtains an explicit presentation of some
maximal abelian ideals.

\begin{prop}   \label{prop:max-ab-odd}
Suppose that $\hot_\ap(\theta)=2d_\ap+1$ is odd. Then 
$\ah:=\bigoplus_{j\ge d_\ap+1} \g_\ap(j)$ (i.e., $\Delta_\ah:=\bigcup_{j\ge d_\ap+1}\Delta_\ap(j)$ in the combinatorial set up) is a maximal abelian ideal of\/ $\be$. 
\end{prop}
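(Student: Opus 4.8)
The plan is to check the three defining properties of a maximal abelian ideal in turn, with essentially all the content sitting in the maximality step; the two tools are the $\BZ$-grading bookkeeping and Kostant's surjectivity (Proposition~\ref{prop:non-zero}) as packaged in Corollary~\ref{cl:11}. Write $d=d_\ap$, so $\hot_\ap(\theta)=2d+1$ and $\Delta_\ah=\bigcup_{j\ge d+1}\Delta_\ap(j)$. First I would dispose of the \emph{ideal} property by a one-line height count: if $\gamma\in\Delta_\ah$, so $\hot_\ap(\gamma)\ge d+1$, and $\nu\in\Delta^+$ with $\gamma+\nu\in\Delta^+$, then $\hot_\ap(\gamma+\nu)=\hot_\ap(\gamma)+\hot_\ap(\nu)\ge d+1$ since $\hot_\ap(\nu)\ge 0$, whence $\gamma+\nu\in\Delta_\ah$. \emph{Abelianness} is equally immediate: if $\gamma',\gamma''\in\Delta_\ah$ and $\gamma'+\gamma''$ were a root, then $\hot_\ap(\gamma'+\gamma'')\ge 2(d+1)=2d+2>2d+1=\hot_\ap(\theta)$, contradicting the maximality of $\hot_\ap(\theta)$ among all roots. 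Thus $\Delta_\ah$ is an abelian upper ideal, using nothing beyond the grading.

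For \emph{maximality}, suppose $I$ is an abelian upper ideal with $\Delta_\ah\subsetneq I$, and pick $\gamma\in I\setminus\Delta_\ah$, so $\hot_\ap(\gamma)=i\le d$. Applying Corollary~\ref{cl:11} repeatedly with $j=1$, I would construct a chain $\gamma=\gamma_i\prec\gamma_{i+1}\prec\dots\prec\gamma_d$ with $\gamma_k\in\Delta_\ap(k)$ and $\gamma_{k+1}=\gamma_k+(\text{degree-}1\text{ root})$; each step is legitimate because $k+1\le d\le\hot_\ap(\theta)$. Since $I$ is an upper ideal and $\gamma_{k+1}\succ\gamma_k$, induction gives $\gamma_k\in I$ for all $k$, so in particular $I$ contains a root $\gamma_d$ with $\hot_\ap(\gamma_d)=d$. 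Now I apply Corollary~\ref{cl:11} once more to $\gamma_d\in\Delta_\ap(d)$ with $j=d+1$: because $d+(d+1)=2d+1=\hot_\ap(\theta)$, there is $\nu\in\Delta_\ap(d+1)$ with $\gamma_d+\nu\in\Delta_\ap(2d+1)\subset\Delta^+$. But $\nu\in\Delta_\ap(d+1)\subseteq\Delta_\ah\subseteq I$ and $\gamma_d\in I$, so $I$ contains two roots whose sum is a root — contradicting the abelianness of $I$. Hence $I=\Delta_\ah$, and $\Delta_\ah$ is maximal.

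I expect the one genuinely load-bearing point to be the observation that the oddness hypothesis $\hot_\ap(\theta)=2d+1$ is used exactly once but decisively: it is what makes the middle degree $d$ and the bottom degree $d+1$ of $\ah$ add up to the top degree $\hot_\ap(\theta)$, so that the final application of Corollary~\ref{cl:11} yields an \emph{honest} root and kills any enlargement. In the even case $\hot_\ap(\theta)=2d$ the relevant pairing would instead land inside the single degree $d$, which is not contained in $\ah$, so the middle degree could be split and a strictly larger abelian ideal produced; this is precisely why the even case is handled separately through the $\thi$-analysis of the preceding section. Everything else in the argument is the routine height arithmetic of the $\BZ$-grading, so the only place demanding care is verifying that the degree equalities line up at the top, which is where the hypothesis enters.
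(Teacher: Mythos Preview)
Your proof is correct and lands on the same decisive application of Corollary~\ref{cl:11} (with $i=d_\ap$, $j=d_\ap+1$) as the paper, but the reduction to that step is handled differently. The paper exploits the simplicity of the $\el$-modules $\g_\ap(i)$ to note that the highest weight $\lambda$ of $\Delta_\ap(d_\ap)$ is the \emph{unique} maximal element of $\Delta^+\setminus\Delta_\ah$; hence any strictly larger upper ideal must already contain $\lambda$, and it suffices to show $\Delta_\ah\cup\{\lambda\}$ is not abelian, which follows from one use of Corollary~\ref{cl:11}. You instead start from an arbitrary $\gamma\in I\setminus\Delta_\ah$ and climb to degree $d_\ap$ by repeated applications of Corollary~\ref{cl:11} with $j=1$, staying inside the upper ideal $I$ at each step. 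Your route is a bit longer but does not require knowing in advance that the complement has a single maximal element; the paper's route is a one-line structural reduction that goes straight to the only root that matters. Both arguments use the oddness hypothesis at exactly the same point, namely to make $d_\ap+(d_\ap+1)=\hot_\ap(\theta)$.
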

\begin{proof}
Obviously, $\ah$ is abelian. Let $\lb\in\Delta_\ap(d_\ap)$ be the highest weight. It follows from 
the simplicity of all $\el$-modules $\g_\ap(i)$ that $\lb$ is the only maximal element of $\Delta^+\setminus\Delta_\ah$. Therefore, it suffices to prove that the upper ideal $\Delta_\ah\cup\{\lb\}$ 
is not abelian. Indeed, there is $\nu\in \Delta_\ap(d_\ap+1)$ such that $\nu+\lb$ is a root
(apply Corollary~\ref{cl:11} with $i=d_\ap$ and $j=d_\ap+1$.)
\end{proof}

This prompts the following question. Suppose that $\hot_\ap(\theta)=2d_\ap+1$. Then 
$\ah=I(\beta)_{\sf max}$ for some $\beta \in\Pi_l$. What is the relationship between $\ap$ and $\beta$? 
We say below that $\ap\in\Pi$ is {\it odd}, if $\hot_\ap(\theta)$ is odd.

\begin{ex}   \label{ex:max-ab}
{\sf 1)}  If $\hot_\ap(\theta)=1$, i.e., $d_\ap=0$, then $\ah$ is the (abelian) nilradical of the corresponding 
maximal parabolic subalgebra. Then $\beta=\ap$. This covers all simple roots and all maximal abelian 
ideals in type $\GR{A}{n}$.
\\ \indent
{\sf 2)} For $\Delta$ of type $\GR{D}{n}$ or $\GR{E}{n}$, there are exactly three odd simple roots 
$\ap$. 
\\ \indent
{\bf --} For $\GR{D}{n}$, these are the endpoints of the Dynkin diagram and $d_\ap=0$. That is, again $\ap=\beta$ in these cases.
\\ \indent
{\bf --} For $\GR{E}{n}$, there are also odd simple roots with $d_\ap\ge 1$ and then $\beta\ne\ap$.
\\
Nevertheless, the related maximal abelian ideals always correspond to the extreme nodes of the Dynkin 
diagram! Moreover, one always has $\hot_\beta(\theta)=d_\ap+1$. (Similar things happen for 
$\GR{F}{4}$ and $\GR{G}{2}$.) It might be interesting to find a reason behind it.

Below is the table of all exceptional cases with $d_\ap\ge 1$. The numbering of simple roots 
follows~\cite[Tables]{t41}. In particular, the numbering for $\GR{E}{8}$ is 
\raisebox{-1.7ex}{\begin{tikzpicture}[scale= .7, transform shape]
\tikzstyle{every node}=[circle]
\node (h) at (-.3,0) {\bf 1};
\node (a) at (0,0) {\bf 2};
\node (b) at (.3,0) {\bf 3};
\node (c) at (.6,0) {\bf 4};
\node (d) at (.9,0) {\bf 5};
\node (e) at (1.2,0) {\bf 6};
\node (f) at (1.5,0) {\bf 7};
\node (g) at (.9,-.5) {\bf 8};
\end{tikzpicture}} and the extreme nodes correspond to $\ap_1,\ap_7,\ap_8$. 
\end{ex}

\begin{center}
\begin{tabular}{>{$}c<{$}| >{$}c<{$} |>{$}c<{$} >{$}c<{$} |>{$}c<{$} >{$}c<{$} >{$}c<{$}| >{$}c<{$} >{$}c<{$}|} 
 & \GR{E}{6} & \GR{E}{7} & & \GR{E}{8} & & & \GR{F}{4} & \GR{G}{2} \\ \hline \hline
\ap & \ap_3 & \ap_3 & \ap_5 & \ap_2 & \ap_4 & \ap_8 & \ap_3 & \ap_1  \\
d_\ap & 1 & 1 & 1 & 1 & 2 & 1 & 1 & 1 \\
\beta & \ap_6 & \ap_7 & \ap_6 & \ap_1 & \ap_8 & \ap_7 & \ap_4 & \ap_2  \\
\hot_\beta(\theta) & 2 & 2 & 2 & 2 & 3 & 2 & 2 & 2 \\
\end{tabular}                
\end{center}
\section{Bijections related to the maximal abelian ideals}
\label{sect:subsets-of-Pi_l}

\noindent
In  this section, we consider abelian ideals of the form $I(\ap)_{\sf min}$ and $I(\ap)_{\sf max}$ for
$\ap\in\Pi_l$, and their derivatives (intersections and unions).

The following ресулт is Theorem~4.7  in~\cite{jems}.

\begin{thm}    \label{thm:4.7-jems}
For any $\ap\in\Pi_l$, there is a one-to-one correspondence between
$\min\bigl( I(\ap)_{\min}\bigr)$ and $\max \bigl(\Delta^+\setminus I(\ap)_{\max}\bigr)$.
Namely, if $\eta\in 
\max \bigl(\Delta^+\setminus I(\ap)_{\max}\bigr)$, then $\eta':=\theta-\eta \in \min\bigl( I(\ap)_{\min}\bigr)$, 
and vice versa.
\end{thm}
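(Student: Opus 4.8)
The plan is to realise the asserted correspondence as the restriction of the involution $\sigma\colon\gamma\mapsto\theta-\gamma$, which by the paragraph preceding Section~\ref{sect:1} is a fixed-point-free involution of $\gH\setminus\{\theta\}$ pairing up the elements whose sum is $\theta$. Write $\mathcal I:=I(\ap)_{\max}$. Since $\mathcal I$ is the maximal element of $\Ab_\ap=\tau^{-1}(\ap)$, we have $\mathcal I\in\Ab_\ap$, so the listed property $I\cap\gH=I(\mu)_{\min}$ gives $\mathcal I\cap\gH=I(\ap)_{\min}$. Set $J:=I(\ap)_{\min}\setminus\{\theta\}=\mathcal I\cap(\gH\setminus\{\theta\})$. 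Because $I(\ap)_{\min}$ is abelian, no two of its elements sum to the root $\theta$, so $J$ meets each $\sigma$-pair at most once; comparing $\#J=h^*-2$ with the number $\tfrac12\#(\gH\setminus\{\theta\})=h^*-2$ of $\sigma$-pairs (using $\#\gH=2h^*-3$ and $\#I(\ap)_{\min}=h^*-1$) shows that $J$ is a \emph{transversal}, i.e.\ $\sigma(J)=(\gH\setminus\{\theta\})\setminus J$.

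The key step is to show $\max(\Delta^+\setminus\mathcal I)\subseteq\sigma(J)$. Let $\eta\in\max(\Delta^+\setminus\mathcal I)$. Then $\mathcal I\cup\{\eta\}$ is again an upper ideal, since everything $\curge$-above a maximal element of the complement already lies in $\mathcal I$; being a proper upper ideal containing the maximal abelian ideal $\mathcal I$, it is \emph{not} abelian. By the first listed property there are $\eta_1,\eta_2\in\mathcal I\cup\{\eta\}$ with $\eta_1+\eta_2=\theta$; as $\mathcal I$ is abelian one of them, say $\eta_1$, equals $\eta$, and $\eta_2\ne\eta$ (else $2\eta=\theta$). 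Hence $\eta_2\in\mathcal I$, $\eta=\sigma(\eta_2)$, so $\eta\in\gH\setminus\{\theta\}$ and $\eta_2\in\mathcal I\cap(\gH\setminus\{\theta\})=J$, giving $\eta\in\sigma(J)$. This is the heart of the matter: \emph{maximality of the abelian ideal} forces every maximal element of the complement to be a $\sigma$-partner of an element of $I(\ap)_{\min}$.

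It remains to prove that $\sigma$ interchanges the two sets; since $\sigma^2=\mathrm{id}$ it suffices to establish the two inclusions. For $\sigma\bigl(\max(\Delta^+\setminus\mathcal I)\bigr)\subseteq\min(I(\ap)_{\min})$, take $\eta$ as above with $\eta_2=\sigma(\eta)\in J$; if $\eta_2$ were not minimal, pick a cover $\eta_2=\eta_3+\beta$ with $\beta\in\Pi$ and $\eta_3\in I(\ap)_{\min}\cap(\gH\setminus\{\theta\})=J$, so $\eta+\beta=\sigma(\eta_3)\in\gH\setminus\{\theta\}$ is $\curge$-above $\eta$, whence $\eta+\beta\in\mathcal I$ by maximality and thus $\eta+\beta\in J$; then $\eta_3,\eta+\beta\in J$ sum to $\theta$, contradicting abelianness. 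For $\sigma\bigl(\min(I(\ap)_{\min})\bigr)\subseteq\max(\Delta^+\setminus\mathcal I)$, take $\zeta\in\min(I(\ap)_{\min})$, so $\zeta\in J$, and set $\eta=\sigma(\zeta)\in\sigma(J)=(\gH\setminus\{\theta\})\setminus J\subseteq\Delta^+\setminus\mathcal I$. If some cover $\eta+\beta$ ($\beta\in\Pi$) lay in $\Delta^+\setminus\mathcal I$, then $\eta+\beta\in\gH$ (as $\gH\in\AD$), and $\eta+\beta\ne\theta$ (since $\theta\in\mathcal I$), so $\eta+\beta\in(\gH\setminus\{\theta\})\setminus J=\sigma(J)$ and $\sigma(\eta+\beta)=\zeta-\beta\in J\subseteq I(\ap)_{\min}$, contradicting minimality of $\zeta$; hence all covers of $\eta$ lie in $\mathcal I$, and as $\mathcal I$ is an upper ideal of the graded poset $(\Delta^+,\curge)$ this forces $\eta\in\max(\Delta^+\setminus\mathcal I)$. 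The two inclusions together with $\sigma^2=\mathrm{id}$ yield $\sigma\bigl(\max(\Delta^+\setminus\mathcal I)\bigr)=\min(I(\ap)_{\min})$, and since $\sigma$ is injective its restriction is the desired bijection.

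I expect the main obstacle to be precisely the key step of the second paragraph: recognising that maximal elements of $\Delta^+\setminus I(\ap)_{\max}$ must lie in $\gH\setminus\{\theta\}$ and be $\sigma$-partners of elements of $I(\ap)_{\min}$. Once this ``shift by $\theta$'' picture is set up, the minimality and maximality checks are short cover-by-cover arguments using only that $\gH$ is an upper ideal and that $J$ is a $\sigma$-transversal. (The degenerate case $\Delta=\GR{A}{1}$, where $I(\ap)_{\min}=\{\theta\}$, is vacuous and may be excluded.)
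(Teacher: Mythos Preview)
Your proof is correct. Note that the paper does not itself prove Theorem~\ref{thm:4.7-jems}; it cites it from~\cite{jems} and only records that ``the key point in the proof \dots\ was to demonstrate {\sl a priori} that $\max\bigl(\Delta^+\setminus I(\ap)_{\max}\bigr)\subset\gH$''. Your argument establishes exactly this key point and then runs the $\theta$-involution, so the overall strategy coincides with what the paper describes and with the paper's own proof of the generalisation (Theorems~\ref{thm:main1} and~\ref{thm:main2}).

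Where your argument differs is in the mechanism for the key step. The paper's proof of Theorem~\ref{thm:main1} (specialised to $\#S=1$) splits into the cases $\gamma\in\Delta^+_{\sf nc}$ and $\gamma\in\Delta^+_{\sf com}$ and, for general $S$, invokes the existence of meets in $\gH$, which forces the hypothesis $\Delta\ne\GR{A}{n}$. You instead use directly that $I(\ap)_{\max}$ is a \emph{maximal} abelian ideal: adjoining any $\eta\in\max(\Delta^+\setminus I(\ap)_{\max})$ produces a non-abelian upper ideal, hence a $\theta$-pair, one member of which must be $\eta$ itself. This is shorter, avoids the case split, and works uniformly for all types including $\GR{A}{n}$. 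The trade-off is that your argument is tailored to a single $\ap$ (maximality of $I(\ap)_{\max}$ is essential), whereas the paper's approach via meets in $\gH$ is what allows the extension to arbitrary $S\subset\Pi_l$ in Theorem~\ref{thm:main1}. The transversal observation $\sigma(J)=(\gH\setminus\{\theta\})\setminus J$, which you obtain by the counting $\#J=h^*-2$, neatly packages the content of \cite[Lemma~3.3]{jems} and makes the two cover-by-cover inclusion checks go through cleanly.
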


It formally follows from this theorem that $\min\bigl( I(\ap)_{\min}\bigr)$ and $\max \bigl(\Delta^+\setminus I(\ap)_{\max}\bigr)$ both belong to $\gH$. This is clear for the former, since $I(\ap)_{\sf min}\subset \gH$.
And the key point in the proof of Theorem~\ref{thm:4.7-jems} was to demonstrate {\sl a priori} that 
$\max \bigl(\Delta^+\setminus I(\ap)_{\max}\bigr) \subset \gH$.

Below, we provide a generalisation of Theorem~\ref{thm:4.7-jems}, which is even more general than
\cite[Theorem\,4.9]{jems}, i.e., we will {\bf not} assume that $S\subset\Pi_l$ be connected. 
Another improvement is that we give a conceptual proof of that generalisation, while Theorem~4.9 
in \cite{jems} was proved case-by-case and no details has been given there.

The following is a key step for our generalisation of Theorem~\ref{thm:4.7-jems}.

\begin{thm}    \label{thm:main1}
Suppose that $S\subset \Pi_l$ and 
$\gamma\in \displaystyle \max \bigl(\Delta^+\setminus \bigcup_{\ap\in S} I(\ap)_{\max}\bigr)$. 
If $\Delta$ is not of type $\GR{A}{n}$, then $\gamma\in\gH$.
\end{thm}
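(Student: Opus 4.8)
The plan is to argue by contradiction and, starting from a supposed maximal $\gamma\notin\gH$, to manufacture a strictly larger element of the same complement. Write $U=\bigcup_{\ap\in S}I(\ap)_{\sf max}$; as a union of upper ideals it is itself an upper ideal, so $\Delta^+\setminus U$ is a lower ideal. I may assume $S\ne\varnothing$ (for $S=\varnothing$ the complement is all of $\Delta^+$, with unique maximal element $\theta\in\gH$). Fix $\gamma\in\max(\Delta^+\setminus U)$ and suppose $(\gamma,\theta)=0$, i.e. $\gamma\notin\gH$.

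First I would localise the obstruction to the individual maximal ideals. For each $\ap\in S$ we have $\gamma\in\Delta^+\setminus I(\ap)_{\sf max}$, a lower ideal, so $\gamma\curle\eta_\ap$ for some $\eta_\ap\in\max(\Delta^+\setminus I(\ap)_{\sf max})$. The single-root case, Theorem~\ref{thm:4.7-jems}---more precisely its key assertion that $\max(\Delta^+\setminus I(\ap)_{\sf max})\subset\gH$---gives $\eta_\ap\in\gH$; and since the nonempty upper ideal $I(\ap)_{\sf max}$ contains $\theta$, in fact $\eta_\ap\in\gH\setminus\{\theta\}$. Thus $\gamma$ is a common lower bound of the family $\{\eta_\ap\}_{\ap\in S}\subset\gH\setminus\{\theta\}$.

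Next I would compress this family into a single root via the meet. Because $\Delta$ is not of type $\GR{A}{n}$, Remark~\ref{rem:theta-fundam} gives that $\gH\setminus\{\theta\}=\Delta_{\ap_\theta}(1)$ has unique minimal element $\ap_\theta$, so $\ap_\theta\in\supp(\eta_\ap)$ for every $\ap\in S$, and moreover the meet of any two elements of $\gH\setminus\{\theta\}$ stays in $\gH\setminus\{\theta\}$. Since all the supports share $\ap_\theta$, iterating Theorem~\ref{thm:inf=min}(2) shows that the componentwise minimum $\eta:=\min\{\eta_\ap\mid \ap\in S\}$ is the meet $\bigwedge_{\ap\in S}\eta_\ap$ and is a genuine root lying in $\gH\setminus\{\theta\}$. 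From $\gamma\curle\eta_\ap$ for all $\ap$ I read off $\hot_\delta(\gamma)\le\min_\ap\hot_\delta(\eta_\ap)=\hot_\delta(\eta)$ for every $\delta\in\Pi$, i.e. $\gamma\curle\eta$; and as $\eta\in\gH$ while $\gamma\notin\gH$ this is strict, $\gamma\prec\eta$. Finally, $\eta\curle\eta_\ap\notin I(\ap)_{\sf max}$ together with the up-closure of $I(\ap)_{\sf max}$ gives $\eta\notin I(\ap)_{\sf max}$ for each $\ap$, whence $\eta\in\Delta^+\setminus U$. This contradicts the maximality of $\gamma$, so $\gamma\in\gH$.

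The substance of the argument---and the only place the hypothesis on the type is used---lies in the middle step: one needs that a finite family of roots of $\gH\setminus\{\theta\}$ admits a meet realised by the componentwise minimum and, crucially, that this meet again belongs to $\gH\setminus\{\theta\}$. This is exactly where type $\GR{A}{n}$ must be excluded, since there $\gH\setminus\{\theta\}$ has no unique minimal element and the componentwise minimum of two Heisenberg roots can drop out of $\gH$; outside type $\GR{A}{n}$ the needed closure is furnished by Theorem~\ref{thm:inf=min} and Remark~\ref{rem:theta-fundam}, and the remainder is formal poset bookkeeping.
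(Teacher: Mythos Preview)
Your argument is correct. It is, however, organised differently from the paper's proof and is in fact more economical.

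The paper argues by a case split on whether $\gamma$ is commutative or not. If $\gamma\in\Delta^+_{\sf nc}$ and $\gamma\notin\gH$, one finds $\eta,\eta'\succ\gamma$ with $\eta+\eta'=\theta$, takes $\mu=\eta\wedge\eta'\in\gH$, observes that $\mu$ is non-commutative (hence lies in no abelian ideal, in particular in no $I(\ap)_{\sf min}$, and thus in no $I(\ap)_{\sf max}$ since $\mu\in\gH$), and derives a contradiction from $\gamma\prec\mu$. If $\gamma\in\Delta^+_{\sf com}$, one works with the abelian ideal $J=I\langle{\curge}\gamma\rangle$, picks for each $\ap\in S$ a root $\eta_\ap\in(J\cap\gH)\setminus I(\ap)_{\sf min}$, and again takes $\eta=\bigwedge_{\ap\in S}\eta_\ap\in\gH$ to contradict maximality.

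You bypass the dichotomy entirely by invoking the $\#S=1$ case (Theorem~\ref{thm:4.7-jems}) as a black box: each $\gamma$ lies below some $\eta_\ap\in\max(\Delta^+\setminus I(\ap)_{\sf max})\subset\gH\setminus\{\theta\}$, and then the meet $\eta=\bigwedge_{\ap\in S}\eta_\ap$ does the job uniformly. Both proofs use the same crucial ingredient---closure of $\gH\setminus\{\theta\}$ under $\wedge$ outside type $\GR{A}{n}$---but your reduction to the single-root case makes the induction structure transparent and eliminates the need to treat commutative and non-commutative $\gamma$ separately. The paper's approach, on the other hand, is slightly more self-contained in that it does not import the conclusion of Theorem~\ref{thm:4.7-jems} wholesale, and its Case~1 gives an independent illustration of how non-commutativity interacts with the ideals $I(\ap)_{\sf min}$.
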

\begin{proof}
Here we have to distinguish two possibilities: either $\gamma\in\Delta^+_{\sf nc}$ or 
$\gamma\in\Delta^+_{\sf com}$.

{\sf (1)} \ Suppose that $\gamma\in\Delta^+_{\sf nc}$ and assume that $\gamma\not\in\gH$.  Then there are $\eta,\eta'\succ \gamma$ such that $\eta+\eta'=\theta$, see~\cite[p.\,1897]{imrn}. Here both $\eta$ and $\eta'$ belong to
$\gH\cap \bigl(\bigcup_{\ap\in S} I(\ap)_{\max}\bigr)=\bigcup_{\ap\in S} I(\ap)_{\min}$. Since
$\Delta$ is not of type $\GR{A}{n}$, $\gH$ has a unique minimal element (=\, the unique simple root that is not orthogonal to $\theta$). Therefore, $\mu:=\eta\wedge\eta'$ exists and belongs to $\gH$.
(The existence of $\eta\wedge\eta'$ also follows from Theorem~\ref{thm:inf=min}(1).) Since 
$\eta,\eta'\curge \mu$, we have $\mu\in\Delta^+_{\sf nc}$. This implies that
$\mu\not \in \bigcup_{\ap\in S} I(\ap)_{\min}$ and hence 
$\mu\not\in \bigcup_{\ap\in S} I(\ap)_{\max}$. By the definition of meet, $\gamma\curle \mu$.
Furthermore, $\gamma\not\in\gH$ and $\mu\in\gH$. Hence $\gamma\prec \mu$ and $\gamma$ is not maximal in $\Delta^+\setminus \bigcup_{\ap\in S} I(\ap)_{\max}$. A contradiction!

{\sf (2)} \ Suppose that $\gamma\in\Delta^+_{\sf com}$. Consider the abelian ideal 
$J=I\langle{\curge}\gamma\rangle$. By the assumption, $J\setminus\{\gamma\} \subset
\bigcup_{\ap\in S} I(\ap)_{\max}$. On the other hand, since $J\not\subset I(\ap)_{\sf max}$ for each $\ap\in S$, we conclude that
\[
 J\cap \gH\not\subset I(\ap)_{\sf max}\cap\gH=I(\ap)_{\min} , 
\]
see \cite[Prop.\,3.2]{jems}. For each $\ap\in S$, we pick 
$\eta_\ap \in (J\cap\gH)\setminus I(\ap)_{\sf min}$. Then $\eta_\ap\curge \gamma$. Since $\Delta$ is not 
of type $\GR{A}{n}$, the meet 
$\eta:=\underset{\ap\in S}{\wedge}\eta_\ap$ exists and belong to $\gH$ (Remark~\ref{rem:theta-fundam}) 
and also $\eta\curge\gamma$. Note also that $\eta\not\in I(\ap)_{\min}$ for each $\ap\in S$.
(Otherwise, if $\eta\in I(\ap_0)_{\min}$, then $\eta_{\ap_0}\in I(\ap_0)_{\min}$ as well.)
Therefore, $\eta \not \in \bigcup_{\ap\in S} I(\ap)_{\min}$ and hence 
$\eta\not\in \bigcup_{\ap\in S} I(\ap)_{\max}$ (because $\eta\in\gH$). 
As $\gamma$ is assumed to be maximal in $\Delta^+\setminus \bigcup_{\ap\in S} I(\ap)_{\max}$, we must have $\gamma=\eta\in \gH$.
\end{proof}

\begin{rema}
For $\GR{A}{n}$, this  theorem remains true if we add the hypothesis that $S\subset \Pi_l$ is a {\sl
connected} subset in the Dynkin diagram, see also Example~\ref{ex:sl_n}.
\end{rema}
\begin{thm}    \label{thm:main2}
If $S\subset \Pi_l$ is arbitrary and $\Delta$ is not of type $\GR{A}{n}$, then there is the bijection
\[
 \eta\in \min\bigl( \bigcap_{\ap\in S}I(\ap)_{\min}\bigr)   \stackrel{1:1}{\longmapsto} 
 \eta'=\theta-\eta \in \max \bigl(\Delta^+\setminus \bigcup_{\ap\in S} I(\ap)_{\max}\bigr) .
\] 
\end{thm}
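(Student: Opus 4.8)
The plan is to establish the bijection by combining the involution $\eta\mapsto\theta-\eta$ on $\gH$ with the already-proved Theorem~\ref{thm:main1}, which guarantees that $\max\bigl(\Delta^+\setminus\bigcup_{\ap\in S}I(\ap)_{\max}\bigr)\subset\gH$. First I would recall the basic symmetry: for each $\eta\in\gH\setminus\{\theta\}$ there is a unique $\eta'\in\gH\setminus\{\theta\}$ with $\eta+\eta'=\theta$, and this map is an order-reversing involution on $\gH\setminus\{\theta\}$ (since $\eta_1\prec\eta_2$ forces $\theta-\eta_1\succ\theta-\eta_2$). Both sides of the claimed bijection live inside $\gH$: the left-hand set is contained in $\bigcap_{\ap\in S}I(\ap)_{\min}\subset\gH$, and the right-hand set is contained in $\gH$ by Theorem~\ref{thm:main1}. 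So the map $\eta\mapsto\theta-\eta$ is a well-defined involution on $\gH\setminus\{\theta\}$, and it remains only to check that it carries the one set onto the other.

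\begin{proof}
By Theorem~\ref{thm:main1}, every element of
$\max\bigl(\Delta^+\setminus\bigcup_{\ap\in S}I(\ap)_{\max}\bigr)$ lies in $\gH$, and by construction
$\min\bigl(\bigcap_{\ap\in S}I(\ap)_{\min}\bigr)\subset\gH$ as well, since each $I(\ap)_{\min}\subset\gH$.
Neither set contains $\theta$: indeed $\theta\in I(\ap)_{\max}$ for every $\ap$, while $\theta$ is not minimal
in $\bigcap_{\ap\in S}I(\ap)_{\min}$ (for $\ap\in S$, the simple root $\ap\curle\theta$ and $\ap$ lies in
$I(\ap)_{\min}$, so $\theta$ is strictly above some element). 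Thus the order-reversing involution
$\sigma\colon\eta\mapsto\theta-\eta$ is defined on both sets as a map into $\gH\setminus\{\theta\}$, and it
suffices to show $\sigma$ interchanges them.

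Fix $\ap\in\Pi_l$. The crucial local fact is that $\sigma$ restricts to a bijection between
$I(\ap)_{\min}\setminus\{\theta\}$ and its complement in $\gH\setminus\{\theta\}$: recall
$\#I(\ap)_{\min}=h^*-1$ accounts for $\theta$ together with exactly half of $\gH\setminus\{\theta\}$, and
for $\eta\in\gH\setminus\{\theta\}$ exactly one of $\eta,\theta-\eta$ lies in $I(\ap)_{\min}$. Consequently, for
$\eta\in\gH\setminus\{\theta\}$ one has $\eta\in I(\ap)_{\min}\iff\theta-\eta\notin I(\ap)_{\min}$. Taking
unions and intersections over $S$, this yields, for $\eta\in\gH\setminus\{\theta\}$,
\[
 \eta\in\bigcap_{\ap\in S}I(\ap)_{\min}\ \Longleftrightarrow\
 \theta-\eta\notin\bigcup_{\ap\in S}I(\ap)_{\min}.
\]
Since for elements of $\gH$ membership in $I(\ap)_{\max}$ is equivalent to membership in $I(\ap)_{\min}$
(by \cite[Prop.\,3.2]{jems}, as $I(\ap)_{\max}\cap\gH=I(\ap)_{\min}$), the right-hand condition says
precisely that $\theta-\eta\notin\bigcup_{\ap\in S}I(\ap)_{\max}$, i.e.\
$\theta-\eta\in\Delta^+\setminus\bigcup_{\ap\in S}I(\ap)_{\max}$.

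It remains to transport extremality through $\sigma$. Because $\sigma$ is an order-reversing involution on
$\gH\setminus\{\theta\}$, it carries minimal elements of a subset to maximal elements of its $\sigma$-image.
The set equivalence just established shows that $\sigma$ maps
$\bigl(\bigcap_{\ap\in S}I(\ap)_{\min}\bigr)\setminus\{\theta\}=\bigcap_{\ap\in S}I(\ap)_{\min}$ onto the
set $\bigl(\Delta^+\setminus\bigcup_{\ap\in S}I(\ap)_{\max}\bigr)\cap\gH$. Hence $\sigma$ sends
$\min\bigl(\bigcap_{\ap\in S}I(\ap)_{\min}\bigr)$ bijectively onto the maximal elements of
$\bigl(\Delta^+\setminus\bigcup_{\ap\in S}I(\ap)_{\max}\bigr)\cap\gH$. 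By Theorem~\ref{thm:main1} the
latter maximal elements coincide with $\max\bigl(\Delta^+\setminus\bigcup_{\ap\in S}I(\ap)_{\max}\bigr)$,
since every maximal element of the full complement already lies in $\gH$. This is exactly the asserted
bijection.
\end{proof}

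The step I expect to be the main obstacle is the last one: matching \emph{maximality inside $\gH$} with
\emph{maximality inside all of $\Delta^+$}. Order-reversal under $\sigma$ only controls extremality relative
to $\gH\setminus\{\theta\}$, so a priori a $\sigma$-image could be maximal in $\gH$ but fail to be maximal
in $\Delta^+$ because some non-Heisenberg root sits above it. This is precisely where
Theorem~\ref{thm:main1} is indispensable: it certifies that no such escape route exists, i.e.\ the global
maximal elements of the complement are already Heisenberg, so the two notions of maximality agree and the
bijection closes up.
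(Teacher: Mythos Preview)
Your overall strategy is exactly the paper's: use the order-reversing involution $\eta\mapsto\theta-\eta$ on $\gH\setminus\{\theta\}$, the splitting property of $I(\ap)_{\min}$ in $\gH$ (\cite[Lemma\,3.3]{jems}), the identity $I(\ap)_{\max}\cap\gH=I(\ap)_{\min}$, and Theorem~\ref{thm:main1} to pass from ``maximal in $\gH$'' to ``maximal in $\Delta^+$''. Your packaging via the involution is slightly cleaner than the paper's explicit forward/backward verification, but the content is the same.

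There is, however, one genuine slip. Your argument that $\theta$ is not minimal in $\bigcap_{\ap\in S}I(\ap)_{\min}$ invokes ``the simple root $\ap$ lies in $I(\ap)_{\min}$''. This is false in general: since $I(\ap)_{\min}\subset\gH$, a simple root can lie in $I(\ap)_{\min}$ only if it belongs to $\gH$, i.e.\ only if it equals $\ap_\theta$. Moreover, even if it held, you would need an element of the \emph{intersection} $\bigcap_{\ap\in S}I(\ap)_{\min}$ strictly below $\theta$, not just of one $I(\ap)_{\min}$. The fix is exactly what the paper does: in the non-$\GR{A}{n}$ case $\theta$ is covered only by $\theta-\ap_\theta$, and every $I(\ap)_{\min}$ (being an upper ideal of size $h^*-1\ge 2$) contains $\theta-\ap_\theta$; hence $\theta-\ap_\theta\in\bigcap_{\ap\in S}I(\ap)_{\min}$ and $\theta$ is not minimal. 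Relatedly, the displayed equality $\bigl(\bigcap_{\ap\in S}I(\ap)_{\min}\bigr)\setminus\{\theta\}=\bigcap_{\ap\in S}I(\ap)_{\min}$ is literally false (the right side contains $\theta$); you mean that removing $\theta$ does not change the set of minimal elements, which is what the previous sentence is supposed to justify.

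Your final step---deducing $\max(B)=\max(C)$ from $\max(C)\subset\gH$---is correct: any $\gamma\in\max(B)$ not maximal in $C$ would lie below some element of $\max(C)\subset B$, a contradiction. So once the justification for ``$\theta$ is not minimal'' is repaired, your proof goes through and matches the paper's.
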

\begin{proof}
{\sf (1)} \ Suppose that $\eta\in \min\bigl( \bigcap_{\ap\in S}I(\ap)_{\min}\bigr)$. As $\Delta$ is not of type
$\GR{A}{n}$, there is a unique $\ap_\theta\in\Pi$ such that $(\theta,\ap_\theta)\ne 0$. Then 
$\theta-\ap_\theta\in \gH$  is the only root covered by $\theta$. Therefore,
$\theta-\ap_\theta\in I(\ap)_{\min}$ for all $\ap\in\Pi_l$. Hence
$\eta\ne \theta$ and hence $\eta'=\theta-\eta$ is a root (in $\gH$). Since 
$\eta\in I(\ap)_{\sf min}$, we have $\eta'\not\in I(\ap)_{\sf min}$, see \cite[Lemma\,3.3]{jems}. And this 
holds  for each $\ap\in S$. Hence $\eta'\not\in \bigcup_{\ap\in S}I(\ap)_{\min}$ and thereby
$\eta' \not\in \bigcup_{\ap\in S}I(\ap)_{\max}$.

Assume that $\eta'$ is not maximal in $\Delta^+\setminus \bigcup_{\ap\in S} I(\ap)_{\max}$, i.e.,
$\eta'+\beta\not\in \bigcup_{\ap\in S} I(\ap)_{\max}$ for some $\beta\in\Pi$. Again,
$\eta'\prec \theta-\ap_\theta$, hence $\eta'+\beta\in \gH\setminus\{\theta\}$.   Then
$\theta-(\eta'+\beta)=\eta-\beta\in \gH$ and arguing ``backwards'' we obtain that $\eta-\beta\in  \bigcap_{\ap\in S}I(\ap)_{\min}$,
which contradicts the fact that $\eta$ is minimal.

{\sf (2)} \ By Theorem~\ref{thm:main1}, if $\eta'\in \max \bigl(\Delta^+\setminus \bigcup_{\ap\in S} I(\ap)_{\max}\bigr)$, then $\eta'\in\gH$. Under these circumstances, the previous part of the proof can be reversed.
\end{proof}

\begin{ex}     \label{ex:sl_n}
Suppose that $\Delta$ is of type $\GR{A}{n}$, with the usual numbering of simple roots. 
Then $I(\ap_i)_{\sf max}=I\langle{\curge}\ap_i\rangle$ for all $i$ and
$\gH=I(\ap_1)_{\sf max}\cup I(\ap_n)_{\sf max}$, where
\begin{gather*}
I(\ap_1)_{\sf min}= I(\ap_1)_{\sf max}=\{\esi_1-\esi_2,\dots,\esi_1-\esi_n,\esi_1-\esi_{n+1}=\theta\},
\\  
I(\ap_n)_{\sf min}=
I(\ap_n)_{\sf max}=\{\esi_n-\esi_{n+1},\dots,\esi_2-\esi_{n+1},\esi_1-\esi_{n+1}\}. 
\end{gather*}
If $S=\{\ap_1,\ap_n\}$, then $S$ is not connected for $n\ge 3$, 
$I(\ap_1)_{\sf min}\cap I(\ap_n)_{\sf min}=\{\theta\}$, and 
$\max \bigl(\Delta^+\setminus (I(\ap_1)_{\sf max}\cup I(\ap_n)_{\sf max})\bigr)=\{\esi_2-\esi_n\}$. That is,
Theorems~\ref{thm:main1} and \ref{thm:main2} do not apply here. However, both remain true 
if $S$ is assumed to be connected and $S\ne \Pi$. For instance, suppose that 
$S=\{\ap_i,\ap_{i+1},\dots,\ap_j\}$ with $1<i<j<n$. Then
$\min\bigl( \bigcap_{\ap\in S}I(\ap)_{\min}\bigr)=\{\esi_1-\esi_{j+1}, \esi_i-\esi_{n+1}\}$ and 
$\max \bigl(\Delta^+\setminus \bigcup_{\ap\in S} I(\ap)_{\max}\bigr)=\{\esi_1-\esi_{i}, \esi_{j+1}-\esi_{n+1}\}$.

If $S=\Pi$, then $\bigcap_{\ap\in \Pi}I(\ap)_{\min}=\{\theta\}$ and
$\Delta^+=\bigcup_{\ap\in \Pi}I(\ap)_{\max}$.
\end{ex}

As a by-product of Theorem~\ref{thm:main2}, we derive a property of maximal abelian ideals
outside type $\GR{A}{}$. Given $S\subset\Pi_l$, let $\langle S\rangle$ be the smallest connected subset
of $\Pi_l$ containing $S$.

\begin{thm}    \label{thm:connected-envelope}   Let $S\subset\Pi_l$. Then
\begin{itemize}
\item[\sf (i)] \  $\bigcap_{\ap\in S} I(\ap)_{\sf min}=\bigcap_{\ap\in \langle S\rangle} I(\ap)_{\sf min}$;
\item[\sf (ii)] \  if $\Delta\ne \GR{A}{n}$, then \ 
$\bigcup_{\ap\in S} I(\ap)_{\sf max}=\bigcup_{\ap\in \langle S\rangle} I(\ap)_{\sf max}$.
\end{itemize}
\end{thm}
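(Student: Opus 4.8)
The plan is to prove (i) for \emph{all} types (this is the crux) and then deduce (ii) from (i) in the non-$\GR{A}{n}$ case. The easy halves are immediate: since $S\subseteq\langle S\rangle$, one has $\bigcap_{\ap\in\langle S\rangle}I(\ap)_{\sf min}\subseteq\bigcap_{\ap\in S}I(\ap)_{\sf min}$ and $\bigcup_{\ap\in S}I(\ap)_{\sf max}\subseteq\bigcup_{\ap\in\langle S\rangle}I(\ap)_{\sf max}$, so in each item only one inclusion needs work.

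\emph{Reduction of (i).} Every Dynkin diagram is a tree, and $\Pi_l$ spans a connected subtree in each type (the whole diagram in the simply-laced case and a sub-path otherwise). Hence $\langle S\rangle$ is the minimal spanning subtree of $S$ inside $\Pi_l$, whose leaves all lie in $S$; consequently every $\beta\in\langle S\rangle$ lies on the path joining some pair $\ap',\ap''\in S$. So it suffices to establish the following \emph{Key Lemma}: if $\beta\in\Pi_l$ lies on the path in $\Pi_l$ between $\ap',\ap''\in\Pi_l$, then $I(\ap')_{\sf min}\cap I(\ap'')_{\sf min}\subseteq I(\beta)_{\sf min}$. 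Indeed, the lemma yields $\bigcap_{\ap\in S}I(\ap)_{\sf min}\subseteq I(\ap')_{\sf min}\cap I(\ap'')_{\sf min}\subseteq I(\beta)_{\sf min}$ for every $\beta\in\langle S\rangle$, which is exactly the missing inclusion.

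\emph{Proving the Key Lemma (the main obstacle).} I would argue inside $\gH$, using that $I(\ap)_{\sf min}=I(\ap)_{\sf max}\cap\gH$ is an upper ideal of $(\Delta^+,\curge)$ contained in $\gH$, and that the order-reversing involution $\eta\mapsto\theta-\eta$ of $\gH\setminus\{\theta\}$ pairs up its elements so that $I(\ap)_{\sf min}$ consists of $\theta$ together with exactly one member of each pair $\{\eta,\theta-\eta\}$. Thus membership $\eta\in I(\ap)_{\sf min}$ is a \emph{choice}, depending on $\ap$, of one element from each pair, and the lemma amounts to the convexity along the chain of the set of $\ap\in\Pi_l$ that choose a fixed $\eta$. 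In type $\GR{A}{n}$ this is transparent, since for $\eta\in\gH$ one has $\eta\in I(\ap)_{\sf min}$ iff $\ap\in\supp(\eta)$, and supports are intervals. The difficulty is a uniform argument when $\hot_\ap(\theta)\ge 2$ (e.g. $\ap_1$ in $\GR{F}{4}$), where the naive criterion ``$\hot_\ap(\eta)>\hot_\ap(\theta)/2$'' has ties and does not decide the choice. Here I expect to need the inversion-set description $I(\ap)_{\sf min}=\{\theta\}\cup\{\theta-\gamma\mid\gamma\in\gN(w_\ap)\}$ together with the meet of Theorem~\ref{thm:inf=min}: given $\eta\in I(\ap')_{\sf min}\cap I(\ap'')_{\sf min}$ with $\theta-\eta\in I(\beta)_{\sf min}$, I would inspect $\eta\wedge(\theta-\eta)=\min(\eta,\theta-\eta)\curle\thi$, which lies in $\Delta^+_{\sf nc}$, and use the betweenness of $\beta$ to force a contradiction. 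Isolating this contradiction in a case-free way is the hard part.

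\emph{Deduction of (ii) from (i).} Assume $\Delta\ne\GR{A}{n}$. By (i) the sets $\bigcap_{\ap\in S}I(\ap)_{\sf min}$ and $\bigcap_{\ap\in\langle S\rangle}I(\ap)_{\sf min}$ are equal, hence have the same minimal elements. Applying the bijection $\eta\mapsto\theta-\eta$ of Theorem~\ref{thm:main2} (the same map) to both $S$ and $\langle S\rangle$ therefore gives $\max\bigl(\Delta^+\setminus\bigcup_{\ap\in S}I(\ap)_{\sf max}\bigr)=\max\bigl(\Delta^+\setminus\bigcup_{\ap\in\langle S\rangle}I(\ap)_{\sf max}\bigr)$. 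Now a union of upper ideals is again an upper ideal, and a finite upper ideal $I$ is recovered from the maximal elements of its downward-closed complement via $\Delta^+\setminus I=\{\gamma\mid \gamma\curle m \text{ for some } m\in\max(\Delta^+\setminus I)\}$. Hence equality of the two maximal sets forces $\bigcup_{\ap\in S}I(\ap)_{\sf max}=\bigcup_{\ap\in\langle S\rangle}I(\ap)_{\sf max}$, which is (ii).
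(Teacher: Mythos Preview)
Your deduction of (ii) from (i) via Theorem~\ref{thm:main2} is correct and is exactly what the paper does.

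For (i), however, you have a genuine gap: you never prove your Key Lemma. You yourself flag that ``isolating this contradiction in a case-free way is the hard part,'' and the sketch you offer (passing to $\eta\wedge(\theta-\eta)\curle\thi$ and invoking $\Delta^+_{\sf nc}$) does not come with an argument that the betweenness of $\beta$ produces a contradiction. So as written, part (i) is not established.

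The paper sidesteps this entirely by quoting a result already proved in the prequel: \cite[Theorem~2.1]{jems} says that for any $S\subset\Pi_l$ one has
\[
\bigcap_{\ap\in S}I(\ap)_{\sf min}=I(\gamma)_{\sf min},\qquad \gamma=\bigvee_{\ap\in S}\ap .
\]
Part (i) then reduces to the identity $\bigvee_{\ap\in S}\ap=\bigvee_{\ap\in\langle S\rangle}\ap$, which is immediate from the explicit formula for the join of simple roots in \cite[Theorem~A.1]{imrn}: both sides equal $\sum_{\ap\in\langle S\rangle}\ap$, since joining disconnected pieces inserts exactly the connecting chain. Note that your Key Lemma also falls out of this same cited result with no extra work: for $\beta$ on the path from $\ap'$ to $\ap''$ one has $\ap'\vee\ap''\vee\beta=\ap'\vee\ap''$, hence $I(\ap')_{\sf min}\cap I(\ap'')_{\sf min}\cap I(\beta)_{\sf min}=I(\ap')_{\sf min}\cap I(\ap'')_{\sf min}$, i.e.\ $I(\ap')_{\sf min}\cap I(\ap'')_{\sf min}\subseteq I(\beta)_{\sf min}$. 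So the missing ingredient in your argument is precisely this intersection formula, not any new idea about $\gH$ or inversion sets.
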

\begin{proof}
{\sf  (i)}  By \cite[Theorem\,2.1]{jems},  $\underset{\ap\in S}{\bigcap} I(\ap)_{\sf min}=I(\gamma)_{\sf min}$, where
$\gamma=\underset{\ap\in S}{\bigvee}\ap$. It remains to notice that
$\underset{\ap\in S}{\bigvee}\ap=
\sum_{\ap\in \langle S\rangle}\ap=\underset{\ap\in \langle S\rangle}{\bigvee}\ap$.

{\sf  (ii)} This follows from (i) and Theorem~\ref{thm:main2}. Namely, if $\Delta$ is not of type $\GR{A}{n}$, then 
\[
  \max \bigl(\Delta^+\setminus \bigcup_{\ap\in S} I(\ap)_{\max}\bigr)=
  \max \bigl(\Delta^+\setminus \bigcup_{\ap\in \langle S\rangle} I(\ap)_{\max}\bigr) .  
\]
Hence both unions also coincide.
\end{proof} 
The equality $\underset{\ap\in S}{\bigcap} I(\ap)_{\sf min}=I(\underset{\ap\in S}{\bigvee}\ap)_{\sf min}$
has interesting consequences. By~\cite[Prop.\,4.6]{imrn}, the minimal elements of the abelian ideal
$I(\gamma)_{\sf min}$ have the following description:

{\it Let $w_\gamma\in W$  be a unique element of minimal length such that 
$w_\gamma(\theta)=\gamma$. If $\beta\in\Pi$ and $(\beta,\gamma^\vee)=-1$, then 
$w_\gamma^{-1}(\beta+\gamma)=w_\gamma^{-1}(\beta)+\theta \in \min (I(\gamma)_{\sf min})$. 
Conversely, any element of $\min (I(\gamma)_{\sf min})$ is obtained in this way.}

For any $\gamma$ of the form $\underset{\ap\in S}{\bigvee}\ap$, the required simple roots $\beta$ are easily determined, which yields the maximal elements of $\Delta^+\setminus \bigcup_{\ap\in S} I(\ap)_{\max}$. We consider below the particular case in which $S=\Pi_l$.
 
\begin{prop}     \label{prop:ap-br}
Set $|\Pi_l|=\sum_{\ap\in\Pi_l}\ap$. If $|\Pi_l|\ne \theta$, i.e., $\Delta$ is not of type $\GR{A}{n}$, then
there is a unique $\bap\in \Pi$ such that $|\Pi_l|+\bap$ is a root.
More precisely, 

{\bf -- } \ if $\Delta\in \{\GR{D-E}{}\}$, then $\bap$ is the branching point in the Dynkin diagram;

{\bf -- } \ if $\Delta\in \{\GR{B-C-F-G}{}\}$, then $\bap$ is the unique short root that is adjacent to a long
root in the Dynkin diagram.
\\ \indent
In all these cases, $w_{|\Pi_l|}^{-1}(\bap)=-\lfloor\theta/2\rfloor $.
\end{prop}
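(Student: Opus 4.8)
The plan is to reduce everything to the single root $\gamma:=|\Pi_l|$ and to the description of $\min\bigl(I(\gamma)_{\sf min}\bigr)$ quoted from~\cite[Prop.\,4.6]{imrn}. First I would observe that $\Pi_l$ is a \emph{connected} subdiagram: the (unique) multiple bond of $\Delta$, if present, separates the short simple roots from the long ones, so $\Pi_l$ is an interval carrying only single bonds. Hence $\langle\Pi_l\rangle=\Pi_l$ and, by the identity $\bigvee_{\ap\in S}\ap=\sum_{\ap\in\langle S\rangle}\ap$ used in the proof of Theorem~\ref{thm:connected-envelope}, $\gamma=\sum_{\ap\in\Pi_l}\ap=\bigvee_{\ap\in\Pi_l}\ap$. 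In particular $\gamma$ is a root; moreover it is \emph{long}, because the standard subsystem $\Delta\cap\mathrm{span}(\Pi_l)$ has base $\Pi_l$ with no multiple bonds, hence is simply-laced with all of its roots of the long length of $\Delta$, and $\gamma$ lies in it. Finally, Theorem~\ref{thm:connected-envelope}(i) (equivalently~\cite[Theorem\,2.1]{jems}) gives $\bigcap_{\ap\in\Pi_l}I(\ap)_{\sf min}=I(\gamma)_{\sf min}$.

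Next I would compute $\min\bigl(I(\gamma)_{\sf min}\bigr)$ conceptually. Since $\bigcup_{\ap\in\Pi_l}I(\ap)_{\sf max}=\Delta^+_{\sf com}$, the bijection of Theorem~\ref{thm:main2} identifies $\min\bigl(I(\gamma)_{\sf min}\bigr)$ with $\max\bigl(\Delta^+_{\sf nc}\bigr)$ via $\eta\mapsto\theta-\eta$, and Theorem~\ref{thm:delta_nc} says the latter is the singleton $\{\thi\}$. Thus $\min\bigl(I(\gamma)_{\sf min}\bigr)=\{\theta-\thi\}=\{\tthe\}$ is a single element. Plugging this into the description of $\min\bigl(I(\gamma)_{\sf min}\bigr)$ from~\cite[Prop.\,4.6]{imrn}, I conclude that there is exactly one $\bap\in\Pi$ with $(\bap,\gamma^\vee)=-1$, and that its associated minimal element is $w_\gamma^{-1}(\bap)+\theta=\tthe$. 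Solving for $w_\gamma^{-1}(\bap)$ gives at once the last assertion, $w_{|\Pi_l|}^{-1}(\bap)=\tthe-\theta=-\thi$. To translate ``$(\bap,\gamma^\vee)=-1$'' into ``$|\Pi_l|+\bap$ is a root'', I would record the elementary equivalence, valid because $\gamma$ is long and $\bap$ is simple, that $\gamma+\bap\in\Delta\iff(\bap,\gamma^\vee)=-1$: indeed $(\bap,\gamma^\vee)\ge 0$ forces $|\gamma+\bap|^2>|\gamma|^2$, and $(\bap,\gamma^\vee)\le-2$ forces $|\gamma+\bap|^2\le 0$, both impossible for a root. This secures existence and uniqueness of $\bap$.

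It remains to name $\bap$, and this is the only step that is computational rather than formal; it amounts to a one-line evaluation of $(\bap,\gamma^\vee)$. In the simply-laced types $\GR{D-E}{}$ one has $\Pi_l=\Pi$ and $\gamma=\sum_{\ap\in\Pi}\ap$, so for a node $\bap$ with $m$ neighbours $(\bap,\gamma^\vee)=(\gamma,\bap^\vee)=\sum_{\ap\in\Pi}(\ap,\bap^\vee)=2-m$ (both roots being long); this equals $-1$ exactly at the trivalent node, i.e.\ the branching point, which is therefore $\bap$. In the types $\GR{B-C-F-G}{}$ the root $\bap$ must be short, since a long $\bap\in\Pi_l$ would satisfy $(\bap,\gamma^\vee)=2-(\#\,\text{long neighbours})\ge 0$ ($\Pi_l$ being a path in these types); and the only short simple root non-orthogonal to $\gamma$ is the one lying across the multiple bond from the long side, whose single long neighbour contributes $(\bap,\gamma^\vee)=-1$ (the pairing of a short root with a long coroot across a double or triple bond). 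Hence $\bap$ is the unique short root adjacent to a long root. I expect the main obstacle to be purely this bookkeeping---verifying in each non-simply-laced type that the distinguished short root has exactly one long neighbour and that the relevant pairing is $-1$; the conceptual content is entirely carried by Theorems~\ref{thm:main2} and~\ref{thm:delta_nc} together with the quoted structure of $\min\bigl(I(\gamma)_{\sf min}\bigr)$.
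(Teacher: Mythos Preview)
Your proposal is correct and follows essentially the same route as the paper: invoke Theorems~\ref{thm:main2} and~\ref{thm:delta_nc} with $S=\Pi_l$ to see that $\min\bigl(I(|\Pi_l|)_{\sf min}\bigr)=\{\tthe\}$ is a singleton, then read off from~\cite[Prop.\,4.6]{imrn} both the uniqueness of $\bap$ and the formula $w_{|\Pi_l|}^{-1}(\bap)=-\thi$. The paper is terser in the last step---it just notes that the claimed $\bap$ satisfies $(|\Pi_l|,\bap)<0$ and lets the already-established uniqueness do the rest---whereas you carry out the Dynkin bookkeeping in full; but the substance is identical.
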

\begin{proof}
If $S=\Pi_l$, then $\bigcup_{\ap\in \Pi_l} I(\ap)_{\max}=\Delta^+_{\sf com}$. Hence
$\max \bigl(\Delta^+\setminus \bigcup_{\ap\in \Pi_l} I(\ap)_{\max}\bigr)=\{\lfloor\theta/2\rfloor \}$, see Theorem~\ref{thm:delta_nc}.
Therefore, by Theorem~\ref{thm:main2}, the unique minimal element of 
$I(|\Pi_l|)_{\sf min}=\bigcap_{\ap\in \Pi_l} I(\ap)_{\min}$ is $\theta-\lfloor\theta/2\rfloor=:\lceil \theta/2\rceil $.
This means that there is a unique simple root $\bap$ such that $(|\Pi_l|^\vee,\bap)=-1$, i.e., 
$|\Pi_l|+\bap$ is a root.
Since $w_{|\Pi_l|}^{-1}(|\Pi_l|+\bap)=\theta+w_{|\Pi_l|}^{-1}(\bap)=
\theta-\lfloor\theta/2\rfloor $, the last assertion follows.

Clearly, $\bap$ specified in the statement satisfies the condition that $(|\Pi_l|,\bap)<0$.
\end{proof}

The $\GR{A}{n}$-case can partially be included in the $\GR{DE}{}$-picture, if we formally assume that 
$\bap=0$ (because there is no branching point).

\section{On the interval $[\thi, \tthe]$}
\label{sect:interval}

\noindent
In this section, we first assume that $\Delta$ is not of type $\GR{A}{n}$.
Since $\thi\in\gH$, we have $\tthe=\theta-\thi\in\gH$ and also $\thi\curle \tthe$. We
consider the interval between $\thi$ and $\tthe$ in $\Delta^+$. Let $h$ be the Coxeter number of $\Delta$.

\begin{prop}   \label{prop:interval}
Set $\mathfrak J=\{\gamma\in\Delta^+\mid \thi\curle\gamma\curle\tthe \}$.
 
{\bf -- } \ if $\Delta\in \{\GR{D-E}{}\}$, then  $\mathfrak J\simeq \mathbb B^3$ and $\hot(\tthe)=(h/2)+1$;

{\bf -- } \ if $\Delta\in \{\GR{B-C-F-G}{}\}$, then $\mathfrak J$ is a segment and\/ $\hot(\tthe)=h/2$.
\end{prop}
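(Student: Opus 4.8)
The plan is to study the interval $\mathfrak J=[\thi,\tthe]$ by embedding it into a Boolean lattice and then pinning down its shape via modularity, self-duality and gradedness. First I would record the coordinate structure. Since $\hot_\ap(\tthe)-\hot_\ap(\thi)=\lceil\hot_\ap(\theta)/2\rceil-\lfloor\hot_\ap(\theta)/2\rfloor$ equals $1$ when $\hot_\ap(\theta)$ is odd and $0$ otherwise, any $\gamma$ with $\thi\curle\gamma\curle\tthe$ has $\hot_\ap(\gamma)=\hot_\ap(\thi)$ for every even $\ap$ and $\hot_\ap(\gamma)\in\{\hot_\ap(\thi),\hot_\ap(\thi)+1\}$ for every odd $\ap$. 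Hence $\gamma\mapsto\{\ap\text{ odd}\mid\hot_\ap(\gamma)>\hot_\ap(\thi)\}$ is an order embedding of $\mathfrak J$ into the Boolean lattice $\mathbb B^k$ on the set of odd simple roots, where $k=\#\{\ap\in\Pi\mid\hot_\ap(\theta)\text{ is odd}\}$; as $\gamma$ is recovered from its image by $\gamma=\thi+\sum_{\ap\in T}\ap$, the inverse on the image is order preserving as well. Moreover $\mathfrak J$ is graded of rank $k$, being an interval of the height-graded poset $\Delta^+$. This already yields the height claims: $\hot(\tthe)=\tfrac12(\hot(\theta)+k)=\tfrac12(h-1+k)$ since $\hot(\theta)=h-1$, so $\hot(\tthe)=h/2+1$ once $k=3$ and $\hot(\tthe)=h/2$ once $k=1$.

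Next I would determine $k$. For $\GR{B}{n},\GR{C}{n},\GR{F}{4},\GR{G}{2}$ the coefficients of $\theta$ contain a single odd entry, so $k=1$ and $\mathfrak J=\{\thi,\tthe\}$ is a two-element chain, i.e.\ a segment; nothing further is needed. For $\GR{D}{n}$ and $\GR{E}{n}$ there are exactly three odd simple roots (this is the list recorded in Example~\ref{ex:max-ab}), so $k=3$ and it remains to prove $\mathfrak J\simeq\mathbb B^3$.

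For the $\GR{D-E}{}$ case the two key inputs are that $\mathfrak J$ is a modular lattice and that it is self-dual. Modularity holds because $\mathfrak J$ is an interval of $I\langle{\curge}\thi\rangle$, which is modular by Theorem~\ref{thm:inf=min}(1). Self-duality comes from the involution $\gamma\mapsto\theta-\gamma$: every $\gamma\in\mathfrak J$ lies in $\gH\setminus\{\theta\}$ (as $\gamma\curge\thi\in\gH$ and $\gamma\curle\tthe\prec\theta$), so $\theta-\gamma$ is again a root by the Heisenberg pairing, and the map interchanges $\thi\leftrightarrow\tthe$ while reversing order; in the $\mathbb B^3$-picture it is complementation $T\mapsto T^c$. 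Now I would count the atoms. A finite lattice with $\hat0\ne\hat1$ has at least one atom. If two singletons $\{i\},\{j\}$ occur, then modularity forces $\{i\}\vee\{j\}$ to have rank $2$, hence to equal the coatom $\{i,j\}$, and self-duality then makes its complement $\{k\}$ an atom as well; so the number of atoms is $0$, $1$, or $3$, and $0$ is excluded. The value $1$ is ruled out by gradedness: a maximal chain of $\mathfrak J$ has length $3$ and must run through an atom below a coatom, but a lone atom $\{i\}$ and the dual lone coatom $\{i\}^c$ are incomparable. Hence there are exactly three atoms; their pairwise joins supply three coatoms, and together with $\thi$ and $\tthe$ this realises all eight subsets of the odd roots inside $\mathfrak J$. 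The order embedding into $\mathbb B^3$ is then a bijection with order-preserving inverse, so $\mathfrak J\simeq\mathbb B^3$.

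The main obstacle is precisely the final point: showing that all three atoms $\thi+\ap$ (equivalently, all three coatoms $\tthe-\ap$) are genuinely roots, so that the embedded subposet fills the entire cube rather than a proper modular subposet. The argument above settles this conceptually from modularity, self-duality and gradedness, thereby avoiding an explicit case-by-case check of the six nontrivial subset sums in $\GR{D}{n}$ and $\GR{E}{n}$.
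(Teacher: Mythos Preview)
Your proof is correct and takes a genuinely different route from the paper's. The paper argues the $\GR{D-E}{}$ case by an inner product computation: since $(\theta-2\thi,\thi^\vee)=1-4=-3$ and each $(\ap,\thi^\vee)\ge -1$ in the simply-laced case, at least three odd simple roots satisfy $(\ap,\thi^\vee)=-1$, hence $\thi+\ap\in\Delta^+$; combined with the cited fact that at most three simple roots can be added to any positive root, this nails down the three atoms directly, and the symmetric calculation $(2\tthe-\theta,\tthe^\vee)=3$ handles the coatoms. Your approach instead extracts the atoms from the lattice structure alone: modularity of $I\langle{\curge}\thi\rangle$, the order-reversing involution $\gamma\mapsto\theta-\gamma$ on $\mathfrak J$, and gradedness force the atom count to be $3$ rather than $1$, after which joins and complements fill in the cube. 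The paper's argument is shorter and more explicit (it identifies the atoms with the odd simple roots immediately), but invokes an external bound on addable simple roots; your argument stays entirely within the machinery the paper has already developed (Theorem~\ref{thm:inf=min} and the Heisenberg pairing) and is a nice illustration of how much the modular-lattice structure determines. Both routes still appeal to the case-by-case observation that $k=3$ for $\GR{D-E}{}$ and $k=1$ for $\GR{B-C-F-G}{}$, which neither proof avoids.
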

\begin{proof} 
This can be verified case-by-case, but we also provide some {\sl a priori\/} hints. \\
It follows from the definition of $\thi$, see Eq.~\eqref{eq:thety}, that 
\[
   \tthe-\thi=\theta-2\thi=2\tthe-\theta=\sum_{\ap:\ \hot_\ap(\theta) \text{ odd}} \ap,
\] 
the sum of all odd simple roots. 
Let $\co\subset \Pi$ denote the set of odd roots. Then
$\hot(\tthe)-\hot(\thi)=\#\co$. 

\noindent 
\textbullet \ \ In the simply-laced case, 
$(\theta-2\thi, \thi^\vee)=1-4=-3$. Therefore, there are at least three $\ap\in\co$ such that 
$(\ap, \thi^\vee)=-1$, i.e., $\thi+\ap\in\Delta^+$.  
On the other hand, for any $\gamma\in\Delta^+$, there are at most three $\ap\in\Pi$ such that
$\gamma+\ap\in\Delta^+$~\cite[Theorem\,3.1(i)]{joc12}. Thus, there are exactly three odd roots $\ap_i$
such that $\thi+\ap_i\in\Delta^+$.
Actually,
there are only three odd roots in the $\{\GR{D-E}{}\}$-case.
Hence every odd root can be added to $\thi$. 
Likewise, $(2\tthe-\theta,\tthe^\vee)=3$ and
the same three roots can be subtracted from $\tthe$. This yields all six roots strictly between
$\thi$ and $\tthe$. If $\co=\{\beta_1,\beta_2,\beta_3\}$, then $\mathfrak J$ is as follows:
\begin{center}
\begin{tikzpicture}
\node (a) at (0,0) {{\color{redi}$\bullet$}};
\node (h) at (0,4.5) {{\color{redi}$\bullet$}};
\node (b) at (-1.5,1.5) {{\color{redi}$\bullet$}};
\node (c) at (0,1.5) {{\color{redi}$\bullet$}};
\node (d) at (1.5,1.5) {{\color{redi}$\bullet$}};
\node (e) at (-1.5,3) {{\color{redi}$\bullet$}};
\node (f) at (0,3) {{\color{redi}$\bullet$}};
\node (g) at (1.5,3) {{\color{redi}$\bullet$}};
\foreach \from/\to in {a/b, a/c, a/d, b/e, b/f, c/e, c/g, d/f, d/g, e/h, f/h, g/h}  \draw[-] (\from) -- (\to);
\draw (.7,0) node {\small $\thi$}; 
\draw (.7,4.5) node {\small $\tthe$}; 
\draw (-2.6,1.5) node {\small $\thi+\beta_1$}; 
\draw (-5,1.5) node {\small $\thi+\beta_2$}; 
\draw (2.5,1.5) node {\small $\thi+\beta_3$}; 
\draw[dashed, ->]   (-4.7,1.7) .. controls (-2.5,2.5) .. (-.1,1.6);
\draw[dashed, ->]   (-4.7,3.2) .. controls (-2.5,4) .. (-.1,3.1);
\draw (-2.6,3) node {\small $\tthe-\beta_3$}; 
\draw (-5,3) node {\small $\tthe-\beta_2$}; 
\draw (2.5,3) node {\small $\tthe-\beta_1$}; \end{tikzpicture}
\end{center}
\textbullet \ \ In the non-simply laced cases, there is always a unique odd root and hence
$\mathfrak J=\{\thi,\tthe\}$.
\end{proof}

\begin{rmk}  If $\Delta$ is of type $\GR{A}{n}$, then $\thi=0$ and $\tthe=\theta$. Then
$\mathfrak J=\Delta^+\cup\{0\}$. However, this poset is not a modular lattice.

\end{rmk}
\vskip1ex
{\small {\bf Acknowledgements.}
Part of this work was done during my stay at the Max-Planck-Institut f\"ur Mathematik (Bonn). I would like to thank the Institute for its warm hospitality and excellent working conditions.}

\end{document}